\numberwithin{equation}{section}
\theoremstyle{definition}
\newtheorem{definition}{Definition}[section]
\theoremstyle{remark}
\newtheorem{remark}[definition]{Remark}
 \theoremstyle{plain}
\newtheorem{theorem}[definition]{Theorem}
\newtheorem{result}[definition]{Result}
\newtheorem{lemma}[definition]{Lemma}
\newtheorem{example}[definition]{Example}
\newcommand{\lam}{\lambda}
\newcommand{\zt}{\zeta}
\newcommand{\A}{\mathsf{A}}
\newcommand{\B}{\mathsf{B}}
\newcommand{\bdy}{\partial}
\newcommand{\OM}{\Omega}
\newcommand{\D}{\mathbb{D}}
\newcommand{\hol}{\mathcal{O}}
\newcommand\hyper[2]{\left|\frac{{#1}-{#2}}{1-\overline{{#2}}{#1}}\right|}
\newcommand{\mobi}{\mathcal{M}_{\mathbb{D}}}
\newcommand{\Z}{\mathbb{Z}}
\newcommand{\nat}{\mathbb{N}}
\newcommand{\bcdot}{\boldsymbol{\cdot}}
\newcommand\intgR[2]{[{#1}\,.\,.\,{#2}]}
\newcommand{\KDel}{\boldsymbol{\delta}}
\newcommand\minpo[1]{\boldsymbol{{\sf M}}_{{#1}}}
\newcommand{\facto}[2]{\genfrac{(}{)}{0pt}{0}{#1}{#2}}
\newcommand\intf[4]{\genfrac{#1}{#2}{0.5pt}{0}{#3}{#4}}
\newcommand{\lrarw}{\longrightarrow}
\newcommand\bv[1]{{#1}^\bullet}
\newcommand\sub[2]{\underset{#1}{#2}}
\newcommand{\hausd}{\mathcal{H}^{C_{\Omega}}}
\newcommand{\multF}{F_{\raisebox{-3pt}{$\scriptstyle{\!\Gamma}$}}}
\newcommand{\C}{\mathbb{C}}
\newcommand*{\rom}[1]{\expandafter\@slowromancap\romannumeral #1@}
\begin{document}

\title[Spectral Pick interpolation \& holomorphic correspondences]{the 3-point spectral Pick
interpolation problem and \\ an application to holomorphic correspondences}

\author{Vikramjeet Singh Chandel}
\address{Department of Mathematics, Indian Institute of Science, Bangalore 560012, India}
\email{vikramjeetc@iisc.ac.in}


\keywords{spectral unit ball, minimal Blaschke product, holomorphic functional calculus, holomorphic correspondences}
\subjclass[2010]{Primary: 30E05, 32H35, 47A56; Secondary: 32F45}

\begin{abstract}
We provide a necessary condition for the existence of a $3$-point holomorphic 
interpolant $F:\D\lrarw\OM_n$, $n\geq 2$. Our condition is inequivalent to the necessary conditions
hitherto known for this problem.
The condition generically involves a single inequality and is reminiscent of the Schwarz lemma.
We combine some of the ideas and techniques used in our result on the $\hol(\D,\,\OM_n)$-interpolation
problem to establish a Schwarz lemma\,---\,which may be of independent interest\,---\,for
holomorphic correspondences from $\D$ to a general planar domain $\OM\Subset\C$.
\end{abstract}
\maketitle

\section{Introduction and statement of results}\label{S:intro}

Let $\D$ denote the open unit disc in the complex plane $\C$ centered at $0$. Given $n\in\Z_{+}$ the
$n^2$-dimensional {\em spectral unit ball} is the set $\OM_n:=\{A\in M_{n}(\C)\,:\,\sigma(A)\subset\D\}$,
where $M_n(\C)$ denotes the set of all $n\times n$ complex matrices and $\sigma$ denotes the
spectrum of a matrix. The interpolation problem referred to in the title of this article
is the following problem:

\begin{itemize}
\item[$(*)$] {Given $M$ distinct points $\zt_1,\dots,\zt_M\in \D$ and matrices $W_1,\dots, W_M\in\OM_n$,
$n\geq 2$, find conditions on the data $\{(\zt_j,\,W_j):1\leq j\leq M\}$ such that there
exists a holomorphic map $F:\D\longrightarrow\OM_n$ satisfying $F(\zt_j)=W_j, \ j=1,\dots,M$.}
\end{itemize}
When such a function $F$ exists, we shall say that $F$ is an {\em interpolant} of the data.
\smallskip

One of the important steps towards understanding the problem $(*)$ was an operator-theoretic
approach due to Bercovici, Foias and Tannenbaum. Using a spectral version of the commutant-lifting theorem,
the authors in \cite{HariCiprianAllen:asclt91}\,---\,under the restriction that
$\sup_{\zt\in\D}\rho(F(\zt))<1$, where $\rho$ denotes the spectral radius\,---\,provided
a characterization for the existence of an interpolant.
This characterization involves a search for $M$ appropriate matrices in $GL_n(\C)$.
\smallskip

Another influential idea was introduced by Agler and Young in \cite{aglerYoung:cldC2si99}. They observed that 
in the case $W_1,\dots,W_M$ are all non-derogatory, then $(*)$ is equivalent to an interpolation
problem from $\D$ to the $n$-dimensional {\em symmetrized polydisc} $G_n$, $n\geq 2$. This is a bounded 
domain in $\C^n$ (see \cite{costara:osNPp05} for the definition of $G_n$). Its relevance to $(*)$ is that, for ``generic'' matricial 
data $(W_1,\dots,W_M)$, the problem $(*)$ descends to a region of much lower dimension with many pleasant
properties. This idea has further been developed in \cite{aglerYoung:2psNPp00}, in the papers \cite{costara:22sNPp05} and
\cite{costara:osNPp05} by Costara, and in Ogle's thesis \cite{ogle:thesis99}.
A matrix $A\in M_n(\C)$ is said to be \emph{non-derogatory} if it admits a cyclic vector.
It is a fact that $A$ being non-derogatory is equivalent to $A$ being similar to the companion
matrix of its characteristic polynomial (see \cite[p.~195]{hornJohn:matanal85}, for instance). Recall:
given a monic polynomial of degree $k$ of the form $p(t)=t^k+\sum_{j=1}^ka_j\,t^{k-j}$, where $a_j\in\C$,
the \emph{companion matrix} of $p$ is the matrix $\mathsf{C}_p\in M_{k}(\C)$ given by
\[
\mathsf{C}_p:=
\begin{bmatrix}
\ 0  & {} & {} & -a_k \ \\
\ 1  & 0  & {} & -a_{k-1} \ \\
\ {} & \ddots  & \ddots & \vdots \ \\
\ \text{\LARGE{0}} &   & 1 & -a_{1} \
\end{bmatrix}_{k\times k}.
\]
By way of the $G_n$-interpolation
problem, Costara \cite{costara:osNPp05} and Ogle \cite{ogle:thesis99} arrived independently at
a necessary condition for the existence of an interpolant for the problem $(*)$ when the data 
$(W_1,\dots,W_M)$ are non-derogatory.
\smallskip

Bharali in \cite{gb:itplSpUb07} observed that when $n\geq 3$, the necessary condition given in
\cite{costara:osNPp05, ogle:thesis99} is not sufficient. He also established\,---\,for the case $M=2$\,---\,a
new necessary condition for the existence of
an interpolant. Result~\ref{Res:gbintS} below is this necessary condition.
It is reminiscent of the inequality in the classical Schwarz lemma; here $\mobi(z_1,z_2)$ is 
the {\em M{\"o}bius distance} between $z_1$ and $z_2$, which is defined as:
\[
 \mobi(z_1,z_2) \ := \ \hyper{z_1}{z_2} \quad\forall z_1,z_2\in\D.
\]

\begin{result}[Bharali, \cite{gb:itplSpUb07}]\label{Res:gbintS}
Let $F\in\hol(\D,\,\OM_n)$, $n\geq 2$, and let $\zt_1,\zt_2\in\D$. 
Write $W_j=F(\zt_j)$, and if $\lam\in\sigma(W_j)$, then let $m(\lam)$ denote the multiplicity of $\lam$ as a
zero of the minimal polynomial of $W_j$. Then:
\begin{equation}\label{E:SchwarzIneq}
\max\left\{\max_{\mu\in\sigma(W_2)}\prod_{\lam\in\sigma(W_1)}\mobi(\mu,\lam)^{m(\lam)}, 
\ \max_{\lambda\in\sigma(W_1)}\prod_{\mu\in\sigma(W_2)}\mobi(\lam,\mu)^{m(\mu)}\right\} \ 
\leq \ \hyper{\zt_1}{\zt_2}.
\end{equation}
\end{result}

The above theorem gives a necessary condition for the two-point interpolation problem 
without any restriction on the matrices, in contrast to the necessary condition
in \cite{costara:osNPp05, ogle:thesis99}. In the same article,
Bharali also shows that for each $n\geq 3$, there exists a data-set for which \eqref{E:SchwarzIneq} implies 
that it cannot admit an interpolant whereas the condition in \cite{costara:osNPp05, ogle:thesis99} is inconclusive.
\smallskip

The ideas behind Result~\ref{Res:gbintS} strongly influence a part of this work.
One of the key tools introduced in \cite{gb:itplSpUb07} that lead to Result~\ref{Res:gbintS} are the following
maps:

\begin{definition}
Given $A\in M_n(\C)$, let $\minpo{A}$ denote its minimal polynomial and write:
\[
\minpo{A}(t)=\sum_{\lam\in\sigma(A)}(t-\lam)^{m(\lam)}.
\]
The finite Blaschke product induced by $\minpo{A}$ if $A\in\OM_n$:
\begin{equation}
B_{A}(t):=\prod_{\lam\in\sigma(A)\subset\D}{\intf{(}{)}{t-\lam}{1-\overline{\lam}t}^{m(\lam)}}.
\label{E:mbpcmp}
\end{equation}
will be called the {\em minimal Blaschke product corresponding to $A$}.
\end{definition}

\noindent $B_{A}$ induces, via the holomorphic functional calculus (which we will discuss in
Section~\ref{S:holo_fc}), a holomorphic self-map of $\OM_n$ that maps $A$ to $0\in M_n(\C)$.
This sets up a form of the Schur algorithm on $\OM_n$, and yields
an easy-to-check necessary condition for the existence of an interpolant for the data in $(*)$,
for the case $M=3$. The existence of these maps $B_A$ is extremely useful, since 
the automorphism group of $\OM_n$ does not act transitively on $\OM_n$
(see \cite{ransfordWhite:hsmsub91}), $n\geq 2$ (whence the {\em classical} Schur algorithm is not
even meaningful).
\smallskip

In \cite{baribeauKamara:rSlsnpI14}, Baribeau and Kamara take a new look at the ideas in 
\cite{gb:itplSpUb07}. This they combine with an inequality\,---\,which may be viewed as a Schwarz lemma
for {\em algebroid multifunctions} of the unit disc (see \cite{nokraneRansford:Slam01} for a definition)\,---\,due
to Nokrane and Ransford \cite[Theorem~1.1]{nokraneRansford:Slam01}. Before we present
their result we need the following:
given $F\in\hol(\D,\,\OM_n)$ and $\zt_1\in\D$, if we denote by $B_1$ the minimal Blaschke product 
corresponding to $F(\zt_1)$ then Theorem~1.3 in \cite{baribeauKamara:rSlsnpI14} states, essentially, 
that for every $\zt\in\D$ we have
\[
 \sigma\big(B_1(F(\zt))/\psi_1(\zt)\big)= S\cup\sigma(F_1(\zt)),
\]
where $S\subset\bdy{\D}$ is a finite (possibly empty) set independent of $\zt$, $F_1\in\hol(\D,\,\OM_{\nu})$, and
\begin{equation}\label{E:nu_BK} 
\nu = \max\nolimits_{\zt\in \D}\left|\sigma\big(B_1(F(\zt))/\psi_1(\zt)\big)\cap\D\right|.
\end{equation}
Here and in what follows, for $\zt_j\in\D$, $j=1,2,3$, $\psi_j$ will denote
the automorphism $\psi_j(\zt):=(\zt-\zt_j)(1-\overline{\zt}_j\zt)^{-1}$, $\zt\in\D$, of $\D$.
We are now in a position to state:

 \begin{result}[paraphrasing {\cite[Corollary~3.1]{baribeauKamara:rSlsnpI14}}]\label{Res:inequality_baribeauKamara}
Let $\zt_1,\zt_2,\zt_3$ be distinct points in $\D$. Let $F\in\hol(\D,\,\OM_n)$, $n\geq 2$. Denote by $B_1$ the minimal 
Blaschke product corresponding to $F(\zt_1)$, and suppose that
$\sigma\big({B_1(F(\zt))}/{\psi_1(\zt)}\big)\not\subset\bdy{\D}$
for every $\zt\in\D$. Let $\nu$ be the number given by \eqref{E:nu_BK}.
Then we have:
\begin{equation}\label{E:inequality_baribeauKamara}
\mathcal{H}^{\mobi}\left(\sigma\intf{(}{)}{B_1(W_{2})}{\psi_1(\zt_{2})}\cap\D,\;\;
\sigma\intf{(}{)}{B_1(W_{3})}{\psi_1(\zt_{3})}\cap\D\right)
\leq{\mobi(\zt_{2},\,\zt_{3})}^{1/{\nu}}
\end{equation}
where $W_j=F(\zt_j)$, $j=2,\,3$.
\end{result}

\noindent{Here, $\mathcal{H}^{\mobi}$ denotes the Hausdorff distance induced by the M{\"o}bius distance
(see \cite[p.~279]{munk:topo_74} for the definition of Hausdorff distance) on the class of bounded
subsets of $\D$.}
\smallskip

Now we are ready to present the first result of this article (in what follows, $B_j$ will denote the 
minimal Blaschke product\,---\,as well as its extension to $\OM_n$\,---\,associated to the matrix $W_j$,
$j=1,2,3$):

\begin{theorem}\label{T:3pt_nec}
Let $\zt_1,\zt_2,\zt_3\in\D$ be distinct points and let $W_1,W_2,W_3\in\Omega_n$, $n\geq 2$. 
Let $m(j,\,\lambda)$ denote the multiplicity of $\lambda$ as a zero of the minimal polynomial
of $W_j$, $j\in\{1,2,3\}$.
Given $j,k\in\{1,2,3\}$ such that $j\not=k$, and $\nu\in \D$, we write:
\[
q(\nu,j,k):=\max\left\{\intf{[}{]}{m(j,\,\lambda)-1}
{\mathsf{ord}_{\lambda}{B'_k}+1}+1:\,\lambda\in \sigma(W_j)\cap B_k^{-1}\{\nu\} \right\}.
\]
Finally, for each $k\in\{1,2,3\}$ let
\[
G(k):=\max\,(\{1,2,3\}\setminus\{k\}),\,\,\text{and}\,\,\,
L(k):=\min\,(\{1,2,3\}\setminus\{k\}).
\]
If there exists a map $F\in\hol(\D,\,\Omega_n)$ such that $F(\zt_j)\,=\,W_j$, $j\in\{1,2,3\}$,
then for each $k\in\{1,2,3\}$, we have: 
\begin{itemize}
\item either $\sigma\left(B_k(W_{G(k)})\right)\subset
D\left(0,\,|\,\psi_{k}(\zt_{G(k)})\,|\right)$,
$\sigma\left(B_k(W_{L(k)})\right)\subset D\left(0,\,|\,\psi_{k}(\zt_{L(k)})\,|\right)$ and
\begin{align*}
{}&\max\left\{\sub{\mu\in\sigma\left(B_k(W_{L(k)})\right)}{\max}
\prod_{\nu\in\sigma\left(B_k(W_{G(k)})\right)}
{\mathcal{M}_{\D}\left(\intf{}{}{\mu}{\psi_k(\zt_{L(k)})},\,\intf{}{}{\nu}{\psi_k(\zt_{G(k)})}
\right)}^{q(\nu,\,G(k),\,k)},\right.\\
&\left.\sub{\mu\in\sigma\left(B_k(W_{G(k)})\right)}{\max}\prod_{\nu\in\sigma\left(B_k(W_{L(k)})\right)}
{\mathcal{M}_{\D}\left(\intf{}{}{\mu}{\psi_k(\zt_{G(k)})},\,\intf{}{}{\nu}{\psi_k(\zt_{L(k)})}
\right)}^{q(\nu,\,L(k),\,k)}\right\}\leq\mathcal{M}_{\D}\left(\zt_{L(k)},\,\zt_{G(k)}\right),
\end{align*}

\item or there exists a $\theta_0\in\mathbb{R}$ such that
\[
{B_k}^{-1}\{e^{i\theta_0}\psi_{k}(\zt_{G(k)})\}\subseteq\sigma(W_{G(k)}) \  \text{and} \
{B_k}^{-1}\{e^{i\theta_0}\psi_{k}(\zt_{L(k)})\}\subseteq\sigma(W_{L(k)}).
\]
\end{itemize}
\end{theorem}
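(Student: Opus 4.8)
The plan is to fix $k\in\{1,2,3\}$ once and for all, abbreviate $G:=G(k)$, $L:=L(k)$, and establish one of the two stated alternatives for this $k$. The object driving everything is the $M_n(\C)$-valued map
\[
g(\zt)\ :=\ B_k(F(\zt))\,/\,\psi_k(\zt),\qquad \zt\in\D .
\]
Since $B_k(W_k)=B_k(F(\zt_k))=0$ (the minimal Blaschke product annihilates $W_k$), the holomorphic matrix function $\zt\mapsto B_k(F(\zt))$ vanishes at $\zt_k$, hence is entrywise divisible by the scalar $\psi_k$; so $g\in\hol(\D,M_n(\C))$. For $j\in\{G,L\}$ we have $g(\zt_j)=B_k(W_j)/\psi_k(\zt_j)$, so $\sigma(g(\zt_j))=\{B_k(\lam)/\psi_k(\zt_j):\lam\in\sigma(W_j)\}$; and by the elementary description of the Jordan structure of $h(A)$ under the holomorphic functional calculus --- on a Jordan block of size $m$ at $\lam$, a map $h$ with $\mathsf{ord}_\lam(h-h(\lam))=r$ produces index of nilpotency $\lceil m/r\rceil$ at $h(\lam)$ --- the multiplicity of $\nu/\psi_k(\zt_j)$ (for $\nu\in\sigma(B_k(W_j))$) as a zero of the minimal polynomial of $g(\zt_j)$ equals $\max\{\lceil m(j,\lam)/(\mathsf{ord}_\lam B_k'+1)\rceil:\lam\in\sigma(W_j)\cap B_k^{-1}\{\nu\}\}$ (using $\mathsf{ord}_\lam(B_k-\nu)=\mathsf{ord}_\lam B_k'+1$ at a preimage, and invariance of minimal polynomials under scaling by $1/\psi_k(\zt_j)\ne0$), which by the inequality $\lceil m/r\rceil\le\lceil(m-1)/r\rceil+1$ is at most $q(\nu,j,k)$. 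This last bound is the only link needed between $g$ and the quantities appearing in the statement.

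Next I would feed $g$ into the structure theorem of Baribeau and Kamara (Theorem~1.3 of \cite{baribeauKamara:rSlsnpI14}, recalled just before Result~\ref{Res:inequality_baribeauKamara}): there are a finite set $S\subset\bdy\D$ independent of $\zt$, an integer $\nu=\max_{\zt\in\D}|\sigma(g(\zt))\cap\D|$, and a map $F_1\in\hol(\D,\OM_\nu)$ with $\sigma(g(\zt))=S\cup\sigma(F_1(\zt))$ for every $\zt\in\D$; since $\sigma(F_1(\zt))\subset\D$, this forces $\sigma(g(\zt))\cap\bdy\D=S$ and $\sigma(g(\zt))\cap\D=\sigma(F_1(\zt))$ for all $\zt$. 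The dichotomy in the statement is exactly the dichotomy ``$S=\varnothing$'' versus ``$S\ne\varnothing$'': I will show that the former implies the first alternative and the latter the second.

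Suppose first that $S=\varnothing$. Then $\sigma(g(\zt))\subset\D$ for every $\zt$, i.e.\ $g\in\hol(\D,\OM_n)$. For $j\in\{G,L\}$ this says every $\lam\in\sigma(W_j)$ satisfies $|B_k(\lam)|<|\psi_k(\zt_j)|$, i.e.\ $\sigma(B_k(W_j))\subset D(0,|\psi_k(\zt_j)|)$ --- which are the two inclusions demanded by the first alternative. Applying Result~\ref{Res:gbintS} to the map $g$ at the two points $\zt_L,\zt_G$ gives
\[
\max\Big\{\ \max_{a\in\sigma(g(\zt_G))}\prod_{b\in\sigma(g(\zt_L))}\mobi(a,b)^{\mu_L(b)}\,,\ \ \max_{b\in\sigma(g(\zt_L))}\prod_{a\in\sigma(g(\zt_G))}\mobi(b,a)^{\mu_G(a)}\ \Big\}\ \le\ \mobi(\zt_L,\zt_G),
\]
where $\mu_L,\mu_G$ denote the minimal-polynomial multiplicities of $g(\zt_L),g(\zt_G)$. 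Substituting $\sigma(g(\zt_j))=\{\nu/\psi_k(\zt_j):\nu\in\sigma(B_k(W_j))\}$, replacing each $\mu_G(\nu/\psi_k(\zt_G))$ by the no-smaller $q(\nu,G,k)$ and each $\mu_L(\nu/\psi_k(\zt_L))$ by the no-smaller $q(\nu,L,k)$, and using that all the Möbius distances involved lie in $[0,1)$, so that raising to a larger exponent only decreases each product, one obtains exactly the inequality asserted in the first alternative. This disposes of the case $S=\varnothing$.

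The case $S\ne\varnothing$ is where the real work lies, and I expect it to be the main obstacle. Fix $e^{i\theta_0}\in S$; then $e^{i\theta_0}\in\sigma(g(\zt))$ for every $\zt\in\D$, with a multiplicity $b\ge1$ in the characteristic polynomial that the Baribeau--Kamara analysis shows to be independent of $\zt$. Unwinding $g(\zt)=B_k(F(\zt))/\psi_k(\zt)$, this means the polynomial $s\mapsto\det(sI-B_k(F(\zt)))$ vanishes to order $\ge b$ at $s=e^{i\theta_0}\psi_k(\zt)$ for every $\zt$; equivalently, writing $\widetilde P_w$ for the degree-$d_k$ polynomial ($d_k:=\deg B_k$) whose zero set is $B_k^{-1}\{w\}$, the resultant $\zt\mapsto\mathrm{Res}_z\big(\det(zI-F(\zt)),\,\widetilde P_{e^{i\theta_0}\psi_k(\zt)}(z)\big)$ vanishes identically on $\D$, with the common zeros carrying total multiplicity $\ge b$. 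The goal is to promote this persistence to the statement that at $\zt=\zt_G$ and at $\zt=\zt_L$ \emph{every} zero of $\widetilde P_{e^{i\theta_0}\psi_k(\zt)}$ is a zero of $\det(zI-F(\zt))$, i.e.\ $B_k^{-1}\{e^{i\theta_0}\psi_k(\zt_G)\}\subseteq\sigma(W_G)$ and $B_k^{-1}\{e^{i\theta_0}\psi_k(\zt_L)\}\subseteq\sigma(W_L)$. I would carry this out by organising the common zeros of the two one-parameter polynomial families as branched analytic multifunctions on $\D$ and arguing that the resulting ``common-zero multifunction'' has the full degree $d_k$: the persistence of the \emph{unimodular} eigenvalue $e^{i\theta_0}$ of $g$ --- which, via the Baribeau--Kamara decomposition and ultimately the Nokrane--Ransford Schwarz lemma \cite[Theorem~1.1]{nokraneRansford:Slam01} underlying it, reflects a genuine block degeneracy of $B_k\circ F$ in which the scalar multiplier $\psi_k$ is carried throughout $\D$ with the full Blaschke multiplicity --- should force all $d_k$ sheets of $B_k^{-1}\{e^{i\theta_0}\psi_k(\cdot)\}$ to be branches of the spectral multifunction $\zt\mapsto\sigma(F(\zt))$ off the branch locus, after which one specialises at $\zt=\zt_G$ and $\zt=\zt_L$. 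The delicate point is precisely this upgrade, from ``at least one $B_k$-preimage is an eigenvalue'' to ``all $d_k$ of them are'', and the control of the boundary behaviour of the spectral multifunction furnished by the Baribeau--Kamara/Nokrane--Ransford machinery is what I expect to make it go through.
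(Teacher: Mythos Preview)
Your treatment of the first alternative (the case $S=\varnothing$, i.e.\ $g(\D)\subset\Omega_n$) is essentially the paper's argument: factor $B_k\circ F$ by $\psi_k$, apply Result~\ref{Res:gbintS} to the quotient at $\zt_L,\zt_G$, and read off the exponents from the Jordan-block computation. One small point: the minimal-polynomial exponent you compute, $\lceil m/r\rceil$ with $r=\mathsf{ord}_\lambda B_k'+1$, is \emph{equal} to $q(\nu,j,k)=\lfloor (m-1)/r\rfloor+1$ (this is an elementary identity for positive integers; note that the paper's $[\,\cdot\,]$ is the floor, and you seem to have written a ceiling in your intermediate step). So no inequality-and-monotonicity detour is needed, though of course yours also works.

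The genuine gap is in the second alternative. You correctly isolate the starting point\,---\,$e^{i\theta_0}\in\sigma(g(\zt))$ for every $\zt\in\D$\,---\,but then propose to upgrade ``at least one $B_k$-preimage is an eigenvalue'' to ``all $d_k$ of them are'' by organising common zeros of two polynomial families into a branched multifunction and invoking the Nokrane--Ransford/Baribeau--Kamara machinery. You do not complete this, and it is far more elaborate than what is needed. The paper's device is a one-line application of the identity theorem to a \emph{single} holomorphic function of one variable. Set $\Phi:=F\circ\psi_k^{-1}$ and consider
\[
H(w)\ :=\ \det\!\big(w\,\mathbb{I}-\Phi(e^{-i\theta_0}B_k(w))\big),\qquad w\in\D .
\]
For each $\zt\in\D$ choose $\omega_\zt\in\sigma(\Phi(\zt))$ with $B_k(\omega_\zt)=e^{i\theta_0}\zt$ (possible by the Spectral Mapping Theorem and your hypothesis on $g$); then $e^{-i\theta_0}B_k(\omega_\zt)=\zt$, so $H(\omega_\zt)=0$. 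Distinct $\zt$ give distinct $\omega_\zt$, hence $H$ vanishes on an uncountable subset of $\D$ and therefore $H\equiv 0$. But $H(w)=0$ says exactly $w\in\sigma\big(\Phi(e^{-i\theta_0}B_k(w))\big)$; taking any $\zt\in\D$ and any $w\in B_k^{-1}\{e^{i\theta_0}\zt\}$ gives $w\in\sigma(\Phi(\zt))$, i.e.\ $B_k^{-1}\{e^{i\theta_0}\zt\}\subseteq\sigma(\Phi(\zt))$ for every $\zt$. Specialising to $\zt=\psi_k(\zt_{G})$ and $\zt=\psi_k(\zt_{L})$ yields the second alternative immediately. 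The key idea you are missing, then, is to use $B_k$ as a change of parameter inside the characteristic determinant rather than trying to follow the branches of $B_k^{-1}$.
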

\noindent Here, $[\bcdot]$ denotes the greatest-integer function. Given $a\in \C$ and a function $g$ that is
holomorphic in a neighbourhood of $a$, $\mathsf{ord}_a{g}$ will denote the order of vanishing of
$g$ at $a$ (with the understanding that $\mathsf{ord}_a{g} = 0$ if $g$ does not vanish at $a$).
\smallskip

\begin{remark}\label{Rmk:comparison}
Theorem~\ref{T:3pt_nec}, unlike Result \ref{Res:inequality_baribeauKamara}, incorporates information
about the Jordan structure of the matricial data. Thus, Theorem~\ref{T:3pt_nec} gives a more restrictive
inequality than \eqref{E:inequality_baribeauKamara} if $\nu=n$. Moreover, in Section~\ref{S:3pt_nec_proof}
we will present a class of $3$-point matricial data in $\D\times\OM_n$, $n\geq 4$,
for which the condition \eqref{E:inequality_baribeauKamara}
and that in \cite{costara:osNPp05, ogle:thesis99} provide no information while 
Theorem~\ref{T:3pt_nec} implies that these data do not admit a $\hol(\D,\,\OM_n)$-interpolant.
\end{remark}

The above discussion about the role of the Nokrane--Ransford result \cite[Theorem~1.1]{nokraneRansford:Slam01}
establishes how holomorphic correspondences are naturally related to the problem $(*)$. This is why we also consider
holomorphic correspondences in this paper. Indeed, 
the method that we employ to provide the proof of Theorem~\ref{T:3pt_nec}
motivated our investigation into finding a Schwarz lemma for holomorphic correspondences,
which are generalizations of algebroid multifunctions.
Before we present our result, we need a few definitions:

\begin{definition}\label{D:hol_corres}
Given domains $D_i\subseteq\C^n$, $i=1,2$, {\em a holomorphic correspondence} from $D_1$ to $D_2$
is an analytic subvariety $\Gamma$ of $D_1\times D_2$ of dimension $n$ such
that $\left.\pi_1\right|_\Gamma$ is surjective (where $\pi_1$ denotes the projection onto $D_1$).
\end{definition}

\noindent{A {\em proper holomorphic correspondence} $\Gamma$ from $D_1$ to $D_2$ is a holomorphic 
correspondence (as defined above) such that $\overline{\Gamma}\cap (D_1\times\bdy{D_2})=\emptyset$.
We refer the reader to Section~\ref{S:nott_comx_geom} for a discussion as to why holomorphic correspondences with
the latter property are called proper holomorphic correspondences.
A proper holomorphic correspondence $\Gamma$ from $D_1$ to $D_2$ also induces the following set-valued map:
\begin{equation}\label{E:img_corr}
\multF(z):=\{w\in D_2\,:\,(z,w)\in\Gamma\} \;\; \forall z\in D_1.
\end{equation}

The {\em Carath\'{e}odory pseudo-distance}, denoted by $C_{\OM}$, on a domain $\OM$ in $\C$ is defined by:
\begin{equation}\label{E:defn_carathdist}
C_{\Omega}(p,\,q):=\sup\{\mathcal{M}_{\D}(f(p),\,f(q))\,:\,f\in\hol(\Omega,\,\D)\}.
\end{equation}
The reader will notice that we have defined $C_{\OM}$ in terms of the M{\"o}bius distance rather than the hyperbolic
distance on $\D$. This is done purposely because {\em most} conclusions in metric geometry that rely on $C_{\OM}$ are essentially unchanged 
if $\mathcal{M}_{\D}$ is replaced by the hyperbolic distance on $\D$ in \eqref{E:defn_carathdist}, and because the  M{\"o}bius distance
arises naturally in the proof of our next theorem. We now present this theorem:

\begin{theorem}\label{T:Schwarzlemma_corres_corol}
Let $\OM$ be a bounded domain in $\C$ and let $\Gamma$ be a proper holomorphic correspondence
from $\D$ to $\OM$. Then for every $\zt_1,\zt_2\in\D$ we have:
\[
\hausd\big(\multF(\zt_1),\,\multF(\zt_2)\big)\leq
{\mathcal{M}_{\D}\left(\zt_1,\,\zt_2\right)}^{{1}/{n}},
\]
where $\hausd$ denotes the Hausdorff distance induced by $C_{\OM}$, and $n$ is the multiplicity
of $\Gamma$.
\end{theorem}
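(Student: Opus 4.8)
The plan is to push $\Gamma$ forward through each member of $\hol(\OM,\,\D)$, apply the Nokrane--Ransford Schwarz lemma for algebroid multifunctions to the resulting objects, and then re-assemble the disc estimates into an estimate for $C_{\OM}$.

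First I would record the structure of $\Gamma$. Since $\pi_1|_\Gamma$ is proper---hence finite---and surjective onto $\D$ while $\dim\Gamma=1$, every fibre $\multF(\zt)$ is a nonempty finite subset of $\OM$ with at most $n$ points, and off a discrete branch locus $Z\subset\D$ the map $\pi_1|_\Gamma$ is an $n$-sheeted covering; so $\zt\mapsto\multF(\zt)$ may be regarded as an $n$-valued holomorphic multifunction. Now fix $f\in\hol(\OM,\,\D)$ and set $\Gamma_f:=(\mathrm{id}_\D\times f)(\Gamma)\subseteq\D\times\D$. The map $(\mathrm{id}_\D\times f)|_\Gamma$ is proper (it lies over the proper map $\pi_1|_\Gamma$, and $f$ takes values in the bounded set $\D$), so by the proper mapping theorem $\Gamma_f$ is a $1$-dimensional analytic subvariety of $\D\times\D$; as $\pi_1|_{\Gamma_f}$ is proper and surjective, $\Gamma_f$ is a holomorphic correspondence from $\D$ to $\D$---that is, an algebroid multifunction of $\D$ with values in $\D$---of some multiplicity $\nu_f\leq n$, with fibre $f(\multF(\zt))$ over $\zt$. (Because $f(\OM)\subseteq\D$, no branch of $\Gamma_f$ reaches $\bdy\D$, so the ``boundary correction'' appearing in Result~\ref{Res:inequality_baribeauKamara} is unnecessary here.)

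Fix distinct $\zt_1,\zt_2\in\D$ (the case $\zt_1=\zt_2$ being trivial) and let $\gamma$ be the hyperbolic geodesic arc from $\zt_1$ to $\zt_2$ in $\D$. Given $a\in\multF(\zt_1)$, lift $\gamma$ to a path in $\Gamma$ starting at $(\zt_1,a)$---possible since $\pi_1|_\Gamma$ is a finite branched covering---and let $b=b(a)\in\multF(\zt_2)$ be the second coordinate of the endpoint of the lift; composing with $f$ turns this lift into a branch of $\Gamma_f$ along $\gamma$ joining $f(a)$ to $f(b)$. Invoking Theorem~1.1 of \cite{nokraneRansford:Slam01} in the form that estimates the two matched endpoints of a branch taken along the geodesic, we get
\[
\mobi(f(a),\,f(b))\ \leq\ \mobi(\zt_1,\,\zt_2)^{1/\nu_f}\ \leq\ \mobi(\zt_1,\,\zt_2)^{1/n},
\]
the last inequality because $\nu_f\leq n$ and $0\leq\mobi(\zt_1,\,\zt_2)<1$. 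The crucial feature is that $b$ depends only on $\Gamma$ and $\gamma$, not on $f$; hence, letting $f$ range over $\hol(\OM,\,\D)$,
\[
C_{\OM}(a,\,\multF(\zt_2))\ \leq\ C_{\OM}(a,\,b)\ =\ \sup_{f\in\hol(\OM,\D)}\mobi(f(a),\,f(b))\ \leq\ \mobi(\zt_1,\,\zt_2)^{1/n}.
\]
As $a\in\multF(\zt_1)$ was arbitrary, one of the two directed Hausdorff distances between $\multF(\zt_1)$ and $\multF(\zt_2)$ relative to $C_{\OM}$ is at most $\mobi(\zt_1,\,\zt_2)^{1/n}$; interchanging $\zt_1$ and $\zt_2$ bounds the other, and since $\mobi$ is symmetric we conclude $\hausd(\multF(\zt_1),\,\multF(\zt_2))\leq\mobi(\zt_1,\,\zt_2)^{1/n}$.

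The main obstacle is the step in which $b$ was chosen independently of $f$. The plain Hausdorff-distance form of \cite{nokraneRansford:Slam01} only supplies, for each $f$ separately, \emph{some} nearby point of $f(\multF(\zt_2))$, and a priori that point varies with $f$, which is useless for the supremum defining $C_{\OM}$. Thus the real work is to transport branch data through the push-forward $f_*$---checking that a branch of $\Gamma$ along the geodesic $\gamma$ maps to a branch of $\Gamma_f$ with the expected endpoints even when $f$ merges sheets ($\nu_f<n$), or when $\zt_1$, $\zt_2$, or intermediate points of $\gamma$ are branch points---and to have the Nokrane--Ransford inequality available for a single branch along the geodesic rather than only for the full fibre. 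An equivalent packaging: first prove the corresponding Schwarz lemma for a proper holomorphic correspondence from $\D$ to a polydisc $\D^N$, where $C_{\D^N}$ is the maximum of the coordinate M{\"o}bius distances, so that the estimate follows by applying the branch form of \cite{nokraneRansford:Slam01} to each coordinate push-forward; then reduce the planar case to it by mapping $\OM$ into $\D^N$ via $F=(f_1,\dots,f_N)$, where the $f_j\in\hol(\OM,\,\D)$ run over near-extremal functions for the finitely many pairs of points of $\multF(\zt_1)\cup\multF(\zt_2)$.
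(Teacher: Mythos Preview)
Your approach differs substantially from the paper's, and the obstacle you flag in your last paragraph is a genuine gap, not a technicality. The paper never pushes $\Gamma$ forward through an individual $f\in\hol(\OM,\D)$ and never invokes \cite{nokraneRansford:Slam01}. Instead it realizes $\Gamma$ via the companion-matrix map $\Phi:\D\to S_n(\OM)$ coming from Lemma~\ref{L:corres_equa_zeroset}, and then, for a fixed target point $\mu$, applies to $\Phi$ (via the holomorphic functional calculus) the \emph{product} $B=\prod_{p\in\bv{\multF(0)}}G_\OM(p,\mu;\,\bcdot)$ of Carath\'eodory extremals. Because every $p\in\bv{\multF(0)}$ occurs as a factor, the Cayley--Hamilton theorem gives $B(\Phi(0))=0$ as a \emph{matrix}; then Lemma~\ref{L:fl2} (the spectral Schwarz lemma, resting on Vesentini's theorem) yields $|B(\mu')|\leq|\zt|$ for every $\mu'\in\multF(\zt)$, and specializing the free parameter to $\mu=\mu'$ gives $\prod_{p}C_\OM(p,\mu)\leq|\zt|$. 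The Hausdorff bound follows because this product dominates $\big({\rm dist}_{C_\OM}(\mu,\multF(0))\big)^n$. The product over the whole fibre is precisely what eliminates any need to match $a$ to a single pre-chosen $b$.

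By contrast, the Hausdorff form of \cite[Theorem~1.1]{nokraneRansford:Slam01} only produces, for each $f$, \emph{some} $\beta_f\in f(\multF(\zt_2))$ with $\mobi(f(a),\beta_f)\leq\mobi(\zt_1,\zt_2)^{1/\nu_f}$; nothing forces $\beta_f=f(b)$ for your path-lifted $b$. The ``branch-along-a-geodesic'' inequality you invoke is a strengthening of Nokrane--Ransford, not a restatement of it, and would itself need proof. Your polydisc reduction does not escape this: applying the Hausdorff estimate coordinatewise matches $f_j(a)$ to some $\beta_j$, but the $\beta_j$ may come from different elements of $\multF(\zt_2)$, so $(\beta_1,\dots,\beta_N)$ need not lie in the image fibre. (A smaller point: lifts of $\gamma$ through branch points of $\pi_1|_\Gamma$ are not unique---locally $w\mapsto w^k$ admits several continuous continuations of a real segment through $0$---so ``the'' endpoint $b(a)$ already requires a choice; harmless, but it should be said.) In short, your outline becomes a proof only once the branch-wise Schwarz inequality for algebroid multifunctions is established, which is work of weight comparable to the theorem itself; the paper's product-of-extremals device sidesteps the issue entirely.
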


\noindent{Here, the multiplicity $n$ is as given by Lemma~\ref{L:corres_equa_zeroset}
(also see Remark~\ref{Rem:mult_corr}) below.}
The inequality above reduces to the distance-decreasing property for the Carath\'{e}odory pseudo-distance if $\Gamma$
is merely the graph of a holomorphic map $F:\D\lrarw\OM$. From this perspective, we can view Theorem~\ref{T:Schwarzlemma_corres_corol} 
as a Schwarz lemma for proper holomorphic correspondences. It turns out that algebroid multifunctions are
precisely the proper holomorphic correspondences from $\D$ to itself (see Lemma~\ref{L:corres_equa_zeroset}).
Hence, Theorem~\ref{T:Schwarzlemma_corres_corol} generalizes
\cite[Theorem~1.1]{nokraneRansford:Slam01} by Nokrane--Ransford.
\smallskip

The above theorem is the consequence of a more precise inequality, which we present in Section~\ref{S:SchLem_holcorres}
as Theorem~\ref{T:SchwLem_holcorres}. The proof of the latter theorem is closely related to the proof of Theorem~\ref{T:3pt_nec}.
The proof of Theorem~\ref{T:3pt_nec} is presented in Section~\ref{S:3pt_nec_proof}, while
the proof of Theorem~\ref{T:Schwarzlemma_corres_corol}  is presented in Section~\ref{S:SchLem_holcorres}.
\medskip

\section{Some remarks on the holomorphic functional calculus}\label{S:holo_fc}

A very essential part of our proofs below is the ability, given a domain $\Omega\subset\C$ and a matrix
$A\in M_n(\C)$, to define $f(A)$ in a meaningful way for each $f\in\hol(\Omega)$, provided
$\sigma(A)\subset\Omega$. Most readers will be aware that this is what is known as the holomorphic functional calculus.
We briefly recapitulate what the holomorphic functional calculus is so that we can make an observation
about the boundary regularity of $\Omega$\,---\,where $\Omega$ is as in the statement of
Theorems~{\ref{T:Schwarzlemma_corres_corol} and \ref{T:SchwLem_holcorres}}\,---\,which will be relevant to our proofs 
in Section~\ref{S:SchLem_holcorres}.
\smallskip

The discussion in this paragraph makes sense for any unital Banach algebra $\mathscr{A}$, where we 
denote the norm on $\mathscr{A}$ by $\|\bcdot\|$. Let $a\in\mathscr{A}$ and write 
\[
\mathsf{Hol}(a):=\text{the set of all functions holomorphic in some neighbourhood of $\sigma(a)$}.
\]
With the understanding that if $f,g\in\mathsf{Hol}(a)$, $f+g$ and $fg$ are defined and holomorphic 
on $\mathsf{dom}(f)\cap\mathsf{dom}(g)\supset\sigma(a)$, which endows $\mathsf{Hol}(a)$ with the structure
of a unital $\C$-algebra, the {\em holomorphic functional calculus} is an assignment
$\Theta_{a}:\,f\longmapsto f(a)$ with the following properties:
\begin{enumerate}
\item[(i)] $\Theta_{a}:\mathsf{Hol}(a)\lrarw\mathscr{A}$ is a $\C$-algebra homomorphism.
\item[(ii)] $\Theta_{a}({\text{id}}_{\C})=a$.
\item[(iii)] Let $\{f_{\nu}\}\subset\mathsf{Hol}(a)$ and suppose there is an open set $U\supset\sigma(a)$
such that $U\subseteq\mathsf{dom}(f_{\nu})$ for every $\nu\in\nat$. Suppose $f\in\mathsf{Hol}(a)$ is such that 
$f_{\nu}\to f$ uniformly on compact subsets of $U$. Then $\|f_{\nu}(a)-f(a)\|\to 0$ as $\nu\to\infty$.
\end{enumerate}
It is a basic result of the spectral theory of Banach algebras that an assignment $\Theta_{a}$ with the above 
properties exists.
\smallskip

We now specialize to the Banach algebra $M_{n}(\C)$. Fix $A\in M_{n}(\C)$. Then, it is well known
that (see \cite[Chapter~7, Section~1]{dunfordsch:Linopera88}) for any polynomial
$p(z)=\sum_{j=0}^m\alpha_jz^j$ such that 
\begin{equation}
p^{(j)}(\lambda)=f^{(j)}(\lambda) \;\; \forall j\,:\, 0\leq j\leq \nu(\lambda)-1 \ \text{and} \ \forall\lambda\in\sigma(A),
\label{E:defeq_funccal}
\end{equation}
where 
\[
\nu(\lambda):=\min\Big\{k\in\nat\,:\,
\text{Ker}(\lambda\mathbb{I}-A)^{k+1}=\text{Ker}(\lambda\mathbb{I}-A)^{k}\Big\}, \ \lambda\in\sigma(A),
\]
the assignment 
\[
\Theta_{A}(f)=f(A):=\sum_{j=0}^m\alpha_j A^j
\]
has the properties (i), (ii) and (iii) above. 
Note that, for $\lambda\in\sigma(A)$, $\nu(\lambda)$ is the exponent of $(z-\lambda)$ in the minimal polynomial
of $A$. Now, given a non-empty open set $\OM\subset\C$ and $A\in M_n(\C)$ such that $\sigma(A)\subset\OM$,
one defines
\[
f(A):=\Theta_{A}(f)\;\;\forall f\in\hol(\OM).
\]
By the foregoing discussion, we need to make {\em no assumptions} about the boundary of  $\OM$
in defining $f(A)$, $f\in\hol(\OM)$, such that
the assignment $\hol(\OM)\ni f\mapsto f(A)$ ({\em provided} $\sigma(A)\subset\OM$) behaves ``naturally''.
We consider this point relevant to make because in treatments of the assignment $\hol(\OM)\ni f\mapsto f(a)$
in certain books, $a$ belonging to a {\em general} unital Banach algebra $\mathscr{A}$, this assignment
is defined via a Cauchy integral and with certain conditions imposed
on $\bdy{\OM}$ when $\OM\varsubsetneq\C$. A rephrasing of the above point in a manner that is more precise and 
relevant to the proofs in Section~\ref{S:SchLem_holcorres} is as follows.

\begin{remark}\label{Rem:matpara_func}
Let $\Omega$ be a non-empty open set in $\C$ and let $S_{n}(\Omega):=\{A\in M_{n}(\C)\,:\,\sigma(A)
\subset\Omega\}$, $n\geq 2$. Then for each $f\in\hol(\Omega)$ and $A\in S_{n}(\Omega)$, we can define
$f(A)$ such that $f(A)$\,---\,fixing $A\in S_{n}(\Omega)$ and writing $f(A):=\Theta_{A}(f)$\,---\,has
the properties (i)--(iii) above (taking $\mathscr{A}=M_{n}(\C)$, $a=A$
and with $\hol(\Omega)$ and $\Omega$ replacing $\mathsf{Hol}(a)$ and $U$, respectively)
$\forall f\in \hol(\Omega)$ \textbf{without} any conditions
on $\bdy{\Omega}$ or on whether $f\in\hol(\Omega)$ extends to $\partial{\Omega}$.
With $\OM$ as above and $A\in S_{n}(\Omega)$, the assignment $\hol(\OM)\ni f\mapsto f(A)$ will \textbf{also}
be called the holomorphic functional calculus in our discussions below.
\end{remark}
We end this section by stating the Spectral Mapping Theorem. When we invoke it in subsequent sections,
it will be for the Banach algebra $\mathscr{A}=M_{n}(\C)$.
\begin{result}[Spectral Mapping Theorem]
Let $\mathscr{A}$ be a unital Banach algebra. Then for every $f\in\mathsf{Hol}(a)$ and $a\in\mathscr{A}$ we have
\[
\sigma(f(a))=f(\sigma(a)).
\]
\end{result} 
\smallskip

\section{Minimal polynomials under the holomorphic functional calculus}\label{S:minpo_holo_fc}
In this section, we develop the key matricial tool needed in establishing Theorem~\ref{T:3pt_nec}, which is 
the computation of the minimal polynomial for $f(A)$, given $f\in\hol(\D)$ and $A\in\Omega_n$, $n\geq 2$. 
This is the content of Theorem~\ref{T:minpo_holo_func_anal}. We begin with a few lemmas which will help us to
establish  Theorem~\ref{T:minpo_holo_func_anal}. In what follows, given integers $p < q$, $\intgR{p}{q}$
will denote the set of
integers $\{p, p+1,\dots, q\}$. Recall:
$[\bcdot]$ denotes the greatest-integer function. Also, given $A\in M_{n}(\C)$, we will
denote its minimal polynomial by $\minpo{A}$.
\smallskip

Let $n\geq 2$. Given $(\alpha_1,\alpha_2,\dots,\alpha_{n-1})\in\C^{n-1}$, we define
$l(\alpha_1,\alpha_2,\dots,\alpha_{n-1})\in\intgR{1}{n}$ by
\[
l(\alpha_1,\alpha_2,\dots,\alpha_{n-1})\,:=\,
\begin{cases}
n, &\text{if $\alpha_j = 0 \ \forall j\in\intgR{1}{n-1}$},\\
\min\{j\in \intgR{1}{n-1}\,:\,\alpha_j\neq 0\}, &\text{otherwise}.
\end{cases}
\]

\begin{lemma}\label{L:minmo_lincomb_nilpo}
Let $(\alpha_0,\alpha_1,\dots,\alpha_{n-1})\in\C^{n}$, $n\geq 2$. Let 
$A\,=\,\sum_{j=0}^{n-1}\alpha_jN^{j}$, where $N\in M_{n}(\C)$ is the
nilpotent matrix of degree $n$ given by $(\KDel_{i+1,\,j})_{i,\,j\,=\,1}^n$, $\KDel_{\mu,\,\nu}$ being
the Kronecker symbol. Then the minimal polynomial for $A$ is given by:
\begin{equation}\label{E:minpoeq_lincomb_nilpo}
\minpo{A}(t)\,=\,(t-\alpha_{0})^{[(n-1)/{l(\alpha_1,\alpha_2,\dots,\alpha_{n-1})}]+1},
\end{equation}
where $l(\alpha_1,\alpha_2,\dots,\alpha_{n-1})$ is as defined above.
\end{lemma}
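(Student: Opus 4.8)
The plan is to reduce everything to a single variable: the matrix $N$ is a single Jordan block of size $n$ with eigenvalue $0$, so $A = \sum_{j=0}^{n-1}\alpha_j N^j$ is upper triangular with $\alpha_0$ on the diagonal, hence $\sigma(A)=\{\alpha_0\}$. Therefore the minimal polynomial of $A$ is $(t-\alpha_0)^{d}$, where $d$ is the smallest positive integer with $(A-\alpha_0 I)^{d}=0$. Writing $M:=A-\alpha_0 I=\sum_{j=1}^{n-1}\alpha_j N^j$, the task is purely to compute the nilpotency index of $M$, and to show it equals $[(n-1)/\ell]+1$ with $\ell:=l(\alpha_1,\dots,\alpha_{n-1})$.

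The key structural fact I would exploit is that $N^n=0$ and, more importantly, that $M = \sum_{j\ge \ell}\alpha_j N^j = \alpha_\ell N^\ell(I + \alpha_\ell^{-1}\sum_{j>\ell}\alpha_j N^{j-\ell})$. The factor $U:=I + \alpha_\ell^{-1}\sum_{j>\ell}\alpha_j N^{j-\ell}$ is unipotent (a polynomial in $N$ with constant term $1$), hence invertible, and it \emph{commutes} with $N$ since both are polynomials in $N$. Consequently $M^k = \alpha_\ell^k N^{k\ell}U^k$ for every $k\ge 1$, and since $U^k$ is invertible we get $M^k=0 \iff N^{k\ell}=0 \iff k\ell \ge n$. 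The smallest such $k$ is $\lceil n/\ell\rceil$. The final step is the elementary identity $\lceil n/\ell\rceil = [(n-1)/\ell]+1$, valid for positive integers $n,\ell$ (check: if $\ell\mid n$ then LHS $=n/\ell$ and RHS $=(n/\ell-1)+1=n/\ell$; if $\ell\nmid n$, write $n = q\ell + r$ with $1\le r\le \ell-1$, then LHS $=q+1$ and $[(n-1)/\ell] = [(q\ell + r - 1)/\ell] = q$, so RHS $= q+1$). This yields $d = [(n-1)/\ell]+1$, which is exactly \eqref{E:minpoeq_lincomb_nilpo}. One should also handle the degenerate case $\ell = n$ (i.e.\ $\alpha_1=\dots=\alpha_{n-1}=0$): then $A=\alpha_0 I$, $M=0$, $d=1$, and indeed $[(n-1)/n]+1 = 0+1 = 1$, consistent.

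I anticipate no serious obstacle here — the proof is a short computation once one notices the commuting-unipotent-factor trick, which replaces a potentially messy direct expansion of powers of $M$ (involving multinomial cross-terms) by the clean observation that only the lowest-order term $\alpha_\ell N^\ell$ governs the nilpotency index. The one point requiring a line of care is justifying that $U$ is invertible and commutes with $N^\ell$: both follow because $\C[N]$ is a commutative ring and $U$ has invertible image in it (its ``constant term'' as a polynomial in $N$, read modulo the nilpotent $N$, is $1$; equivalently $U = I - (\text{nilpotent})$ so $U^{-1} = \sum_{m\ge 0}(I-U)^m$ is a finite sum). With that in hand the argument is complete.
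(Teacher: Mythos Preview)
Your proof is correct and follows essentially the same approach as the paper: both reduce to computing the nilpotency index of $M=A-\alpha_0 I=\alpha_\ell N^\ell+(\text{higher powers of }N)$ and observe that only the leading term $\alpha_\ell N^\ell$ governs when $M^m$ vanishes, yielding the least $m$ with $m\ell\ge n$. The only cosmetic difference is that you package this via the factorization $M=\alpha_\ell N^\ell U$ with $U$ unipotent and invertible in $\C[N]$, whereas the paper simply expands $(A-\alpha_0 I)^m=\alpha_\ell^m N^{\ell m}+(\text{terms in }N^k,\ k>\ell m)$ and uses the linear independence of $I,N,\dots,N^{n-1}$ implicitly; your version makes the ``no cancellation'' step slightly more explicit, but the arguments are otherwise the same.
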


\begin{proof}
The proof consists of two cases.

\noindent{\bf Case 1.} $\alpha_j\,=\,0\,\,\forall j\in\intgR{1}{n-1}$.

\noindent This implies that $l(\alpha_1,\alpha_2,\dots,\alpha_{n-1})\,=\,n$, and
hence $[(n-1)/{l(\alpha_1,\alpha_2,\dots,\alpha_{n-1})}]\,=\,0$.
The minimal polynomial in this case clearly is $(t-\alpha_0)$.
This establishes \eqref{E:minpoeq_lincomb_nilpo} in this case.
\smallskip

\noindent{\bf Case 2.} {\em $\alpha_j\,\neq\,0$ for some $j\in\intgR{1}{n-1}$.}

\noindent We write $l\,\equiv\,l(\alpha_1,\alpha_2,\dots,\alpha_{n-1})$.
Then $A-\alpha_{0}I\,=\,\alpha_lN^l+\cdots+\alpha_{n-1}N^{n-1}$.
Hence
\[
(A-\alpha_{0}I)^{m}={\alpha_l}^{m}N^{lm}+(\text{terms in $N^k$ with $k>lm$}).
\]
We observe thus that the power of $(t-\alpha_0)$ in $\minpo{A}(t)$ must be the least integer
$m$ such that $ml\geq n$. It is elementary to see that that integer is
$[(n-1)/l]+1$. 
\end{proof}

Let $a\in \C$ and let $g$ be a function that is holomorphic in a neighbourhood of $a$. Then, 
$\mathsf{ord}_{a}g$ will denote the order of vanishing of $g$ at $a$.
Recall: this means that $\mathsf{ord}_{a}g$ is the least non-negative
integer $j$ such that $g^{(j)}(a)\,\not=\,0$ (hence, $\mathsf{ord}_{a}g = 0$ if $g$
does not vanish at $a$).

\begin{lemma}\label{L:minpo_Jordan}
Let $\lambda\in\D$ and let $f\in\hol(\D)$ be a non-constant function. Let $J_{n}(\lambda)$ represent
the $n\times n$ Jordan matrix associated to $\lambda$, $n\geq 2$, and $f(J_{n}(\lambda))$ be the
matrix given by the holomorphic functional calculus. Then the minimal polynomial of $f(J_{n}(\lambda))$ is given by
\[
\minpo{f(J_{n}(\lambda))}(t)\,=\,\left(t-f(\lambda)\right)
^{\intf[]{n-1}{\mathsf{ord}_{\lambda}{f'}+1}+1}.
\]
\end{lemma}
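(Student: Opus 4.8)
The idea is to reduce the computation of the minimal polynomial of $f(J_n(\lambda))$ to the nilpotent situation already handled in Lemma~\ref{L:minmo_lincomb_nilpo}. Write $J_n(\lambda) = \lambda I + N$, where $N = (\KDel_{i+1,\,j})_{i,j=1}^n$ is the standard nilpotent of degree $n$, so that $N^n = 0$ but $N^{n-1}\neq 0$. The first step is to apply the holomorphic functional calculus concretely: since $f$ is holomorphic near $\lambda$, the Taylor expansion of $f$ about $\lambda$ gives $f(\zeta) = \sum_{j\geq 0} \tfrac{f^{(j)}(\lambda)}{j!}(\zeta-\lambda)^j$, and by property (ii) together with the polynomial description of $\Theta_A$ in \eqref{E:defeq_funccal} (only derivatives up to order $\nu(\lambda)-1 = n-1$ matter, since $\minpo{J_n(\lambda)}(t) = (t-\lambda)^n$), one obtains
\[
f(J_n(\lambda)) \,=\, \sum_{j=0}^{n-1} \frac{f^{(j)}(\lambda)}{j!}\,N^{j}.
\]
Thus $f(J_n(\lambda))$ is exactly a matrix of the form $A = \sum_{j=0}^{n-1}\alpha_j N^j$ treated in Lemma~\ref{L:minmo_lincomb_nilpo}, with $\alpha_0 = f(\lambda)$ and $\alpha_j = f^{(j)}(\lambda)/j!$ for $1\leq j\leq n-1$.

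Second, I would identify the index $l = l(\alpha_1,\dots,\alpha_{n-1})$ appearing in Lemma~\ref{L:minmo_lincomb_nilpo}. By definition $l$ is the smallest $j\in\intgR{1}{n-1}$ with $\alpha_j\neq 0$ (or $n$ if all vanish). Since $\alpha_j \neq 0 \iff f^{(j)}(\lambda)\neq 0$, the smallest such $j\geq 1$ is precisely $\mathsf{ord}_\lambda f' + 1$: indeed $f'$ has a zero of order $\mathsf{ord}_\lambda f'$ at $\lambda$, i.e. $f^{(1)}(\lambda)=\cdots=f^{(\mathsf{ord}_\lambda f')}(\lambda)=0 \neq f^{(\mathsf{ord}_\lambda f'+1)}(\lambda)$. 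One must check the boundary case where $\mathsf{ord}_\lambda f' + 1 > n-1$, i.e. $f^{(j)}(\lambda)=0$ for all $1\leq j\leq n-1$: then $l = n$ by definition, and one verifies directly that the formula $[(n-1)/l]+1 = [(n-1)/n]+1 = 1$ matches $[(n-1)/(\mathsf{ord}_\lambda f'+1)]+1 = 1$ since $\mathsf{ord}_\lambda f' + 1 \geq n > n-1$ forces that floor to be $0$. (Here non-constancy of $f$ guarantees $f'\not\equiv 0$, so $\mathsf{ord}_\lambda f'$ is a well-defined finite integer and the formula is not vacuous.)

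Third, substitute into \eqref{E:minpoeq_lincomb_nilpo}: Lemma~\ref{L:minmo_lincomb_nilpo} gives
\[
\minpo{f(J_n(\lambda))}(t) \,=\, (t-\alpha_0)^{[(n-1)/l]+1} \,=\, \bigl(t-f(\lambda)\bigr)^{[(n-1)/(\mathsf{ord}_\lambda f'+1)]+1},
\]
which is exactly the claimed formula. I do not expect any serious obstacle here; the only point requiring a little care is the honest justification that the functional calculus $\Theta_{J_n(\lambda)}(f)$ coincides with the truncated Taylor polynomial $\sum_{j=0}^{n-1}\tfrac{f^{(j)}(\lambda)}{j!}N^j$ — this follows because that polynomial $p$ satisfies $p^{(j)}(\lambda) = f^{(j)}(\lambda)$ for $0\leq j\leq n-1 = \nu(\lambda)-1$, which is precisely the defining condition \eqref{E:defeq_funccal} — and the bookkeeping of the degenerate case $\mathsf{ord}_\lambda f' \geq n-1$. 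Everything else is a direct appeal to the two preceding lemmas.
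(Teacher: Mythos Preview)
Your proof is correct and follows the same overall architecture as the paper: establish the formula $f(J_n(\lambda)) = \sum_{j=0}^{n-1}\tfrac{f^{(j)}(\lambda)}{j!}N^j$, then invoke Lemma~\ref{L:minmo_lincomb_nilpo} and match $l(\alpha_1,\dots,\alpha_{n-1})$ with $\mathsf{ord}_\lambda f' + 1$, handling the boundary case separately.

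The one difference worth flagging is how that key formula is obtained. The paper expands $f$ as a power series about $0$, substitutes $J_n(\lambda)=\lambda\mathbb{I}+N$, applies the binomial theorem to $(\lambda\mathbb{I}+N)^k$, and then interchanges the two sums to extract the coefficient of $N^j$. You instead use the Taylor polynomial of $f$ at $\lambda$ and appeal directly to the defining property \eqref{E:defeq_funccal} of the functional calculus (since $\nu(\lambda)=n$ for $J_n(\lambda)$), which yields the formula in one line. Your route is shorter and arguably cleaner; the paper's route is more hands-on but arrives at the same place. Interestingly, the paper even remarks at the outset that \eqref{E:defeq_funccal} ``is not immediately helpful'' for this computation\,---\,your argument shows that in fact it is.
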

\begin{proof}
We begin by noting that, while \eqref{E:defeq_funccal} gives an expression for $f(J_{n}(\lambda))$
in terms of the exponent of $(t-\lambda)$ in $\minpo{J_{n}(\lambda)}$, our task is to determine the analogous 
exponent in $\minpo{f(J_n(\lambda))}$, for which  \eqref{E:defeq_funccal} is not immediately helpful.
\smallskip

Let $R$ be such that $|\lambda|<R<1$. Then, the power series expansion of 
$f$ 
\[
f(z)\,:=\,\sum_{k\in\nat}\intf{}{}{f^{(k)}(0)}{k!}{z^k}\,\,
\text{converges absolutely at each}\,\,z\in D(0;\,R).
\]
Then by elementary properties (see Section~\ref{S:holo_fc}) of the holomorphic functional calculus we get
\begin{equation}\label{E:powerseries_Jordan}
f(J_{n}(\lambda))\,=\,\sum_{k\in\nat}\intf{}{}{f^{(k)}(0)}{k!}(J_{n}(\lambda))^{k}.
\end{equation}
Note that $J_{n}(\lambda)=\lambda\mathbb{I}+N$, where $N$ is the nilpotent matrix as in
Lemma~\ref{L:minmo_lincomb_nilpo}. We can use the binomial expansion to get
\[
(J_{n}(\lambda))^{k}\,=\,(\lambda\mathbb{I}+N)^k\,=\,\sum_{j=0}^{p(k)}
\facto{k}{j}{\lambda}^{k-j}N^j,
\]
where $p(k):=\min(k,\,n-1)$ and $k\in\nat$. Hence \eqref{E:powerseries_Jordan} becomes
\begin{equation}\label{E:doublesum_nilpo}
f(J_{n}(\lambda))\,=\,\sum_{k\in\nat}\intf{}{}{f^{(k)}(0)}{k!}\sum_{j=0}^{p(k)}
\facto{k}{j}{\lambda}^{k-j}N^j.
\end{equation}
The coefficient of $N^j$, $0\leq j\leq n-1$, in \eqref{E:doublesum_nilpo} is 
\[
\sum_{k\geq j}\intf{}{}{f^{(k)}(0)}{k!}\facto{k}{j}{\lambda}^{k-j}\,=\,
\sum_{k\geq j}\intf{}{}{f^{(k)}(0)}{(k-j)!\,{j!}}{\lambda}^{k-j}\,=\,\intf{}{}{f^{(j)}(\lambda)}{j!},\,\,j\in\nat.
\]
Using the fact that $N^n=0$, we get 
\[
f(J_{n}(\lambda))\,=\,\sum_{j=0}^{n-1}\intf{}{}{f^{(j)}(\lambda)}{j!}N^{j}.
\]
From Lemma~\ref{L:minmo_lincomb_nilpo} we have $\minpo{f(J_{n}(\lambda))}(t)\,=\,(t-f(\lambda))^{m}$, 
where 
\[
m\,=\,\intf[]{n-1}{l(f'(\lambda),f''(\lambda),\dots,f^{(n-1)}(\lambda))}+1.
\]
If $\mathsf{ord}_{\lambda}{f'}\leq n-2$,
then $l(f'(\lambda),f''(\lambda),\dots,f^{(n-1)}(\lambda))\,=\,\mathsf{ord}_{\lambda}{f'}+1$,
else $\mathsf{ord}_{\lambda}{f'}+1$ and $l(f'(\lambda),f''(\lambda),\dots,f^{(n-1)}(\lambda))>(n-1)$.
In both the cases we have:
\[
\intf[]{n-1}{l(f'(\lambda),f''(\lambda),\dots,f^{(n-1)}(\lambda))}\,=\,
\intf[]{n-1}{\mathsf{ord}_{\lambda}{f'}+1}.
\]
From the last two expressions, the lemma follows.
\end{proof}

\begin{lemma}\label{L:minmo_Jordanblocks}
Let $\lambda\in\D$ and $f\in\hol(\D)$, $f$ non-constant. Let $n_1\leq n_2\leq\cdots\leq n_{q}$
be a sequence of positive integers. Let $J\,=\,\oplus_{i=1}^{q}J_{n_i}(\lambda)$, where $J_{n_i}(\lambda)$
represents the $n_i\times n_i$ Jordan block associated to $\lambda$. Then the minimal polynomial 
for $f(J)$ is given by:
\[
\minpo{f(J)}(t)\,=\,\left(t-f(\lambda)\right)
^{\intf[]{n_{q}-1}{\mathsf{ord}_{\lambda}{f'}+1}+1}.
\]
\end{lemma}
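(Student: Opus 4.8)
The plan is to reduce the whole computation to the single-block case already handled in Lemma~\ref{L:minpo_Jordan}, using two elementary facts: that the holomorphic functional calculus is compatible with block-diagonal decompositions, and that the minimal polynomial of a direct sum of matrices is the least common multiple of the minimal polynomials of the summands.

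\emph{Step 1: $f(J)=\oplus_{i=1}^q f(J_{n_i}(\lambda))$.} Since $\sigma(J)=\{\lambda\}$ and the minimal polynomial of $J$ is $(t-\lambda)^{n_q}$, in the notation of Section~\ref{S:holo_fc} we have $\nu(\lambda)=n_q$ for $J$. Choose a polynomial $p$ with $p^{(j)}(\lambda)=f^{(j)}(\lambda)$ for all $j\in\intgR{0}{n_q-1}$; then $f(J)=p(J)$ by \eqref{E:defeq_funccal}. As $p$ is a polynomial, $p(J)=\oplus_{i=1}^q p(J_{n_i}(\lambda))$ by direct evaluation. Moreover, each block $J_{n_i}(\lambda)$ has $\nu(\lambda)=n_i\le n_q$, so the \emph{same} polynomial $p$ is admissible in \eqref{E:defeq_funccal} for $f(J_{n_i}(\lambda))$, whence $p(J_{n_i}(\lambda))=f(J_{n_i}(\lambda))$. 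Combining these equalities yields the claim.

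\emph{Step 2: assembling the minimal polynomial.} By Lemma~\ref{L:minpo_Jordan} applied to each block with $n_i\ge 2$ (when $n_i=1$ we simply have $f(J_1(\lambda))=[f(\lambda)]$, whose minimal polynomial $t-f(\lambda)$ is consistent with the formula below), the minimal polynomial of $f(J_{n_i}(\lambda))$ equals $(t-f(\lambda))^{e_i}$ with $e_i:=\intf[]{n_i-1}{\mathsf{ord}_{\lambda}{f'}+1}+1$; here $\mathsf{ord}_{\lambda}{f'}<\infty$ because $f$ is non-constant, so each $e_i$ is a well-defined positive integer. Hence the minimal polynomial of $f(J)=\oplus_i f(J_{n_i}(\lambda))$ is the least common multiple of the polynomials $(t-f(\lambda))^{e_i}$, which\,---\,these all being powers of the single irreducible factor $t-f(\lambda)$\,---\,is $(t-f(\lambda))^{\max_i e_i}$. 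Finally, $N\mapsto \intf[]{N-1}{\mathsf{ord}_{\lambda}{f'}+1}+1$ is nondecreasing in $N$ and $n_q=\max_i n_i$ by hypothesis, so $\max_i e_i=\intf[]{n_q-1}{\mathsf{ord}_{\lambda}{f'}+1}+1$, which is precisely the exponent in the statement.

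I do not expect a genuine obstacle here: the lemma is bookkeeping layered on top of Lemma~\ref{L:minpo_Jordan}. The one point that deserves care is the assertion in Step~1 that a single polynomial $p$ simultaneously realizes the functional calculus for $J$ and for every one of its Jordan blocks; this rests exactly on the inequalities $n_i\le n_q$, i.e.\ on the ordering hypothesis $n_1\le\cdots\le n_q$, which is also what makes the monotonicity step at the end of Step~2 pin the maximum to the $q$-th term.
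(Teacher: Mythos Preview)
Your proof is correct and follows essentially the same route as the paper: decompose $f(J)$ block-wise, apply Lemma~\ref{L:minpo_Jordan} to each summand, take the least common multiple, and use monotonicity in $n_i$ to identify the maximal exponent. Your Step~1 is in fact more carefully justified than in the paper, which simply asserts $f(J)=\oplus_i f(J_{n_i}(\lambda))$ without spelling out why a single interpolating polynomial $p$ works simultaneously for $J$ and for each block.
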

\begin{proof}
Note that $f(J)\,=\,\oplus_{i=1}^{q}f(J_{n_i}(\lambda))$. If $n_i = 1$, for $i=1,\ldots,q$, then the following is
obvious; else Lemma~\ref{L:minpo_Jordan} gives us
\begin{equation}\label{E:minpo_summand}
\minpo{f(J_{n_i}(\lambda))}(t)\,=\,\left(t-f(\lambda)\right)
^{\intf[]{n_i-1}{\mathsf{ord}_{\lambda}{f'}+1}+1}\,\,\forall i\in\intgR{1}{q}.
\end{equation}
For each $i$, we also have
\begin{equation}\label{E:ineq_expo}
\intf[]{n_i-1}{\mathsf{ord}_{\lambda}{f'}+1}+1\leq\,
\intf[]{n_q-1}{\mathsf{ord}_{\lambda}{f'}+1}+1.
\end{equation}
Now the minimal polynomial for a matrix that is a finite direct sum of matrices is the least common 
multiple (in the ring $\C[t]$) of the minimal polynomials of the direct summands. This, together
with \eqref{E:minpo_summand}
and \eqref{E:ineq_expo}, establishes the lemma.
\end{proof}

\begin{theorem}\label{T:minpo_holo_func_anal}
Let $A\in\Omega_n$, $n\geq 2$, and let $f\in\hol(\D)$ be a non-constant function. Suppose that the minimal
polynomial for $A$ is given by
\[
\minpo{A}(t)\,=\,\prod_{\lambda\in\sigma(A)}(t-\lambda)^{m(\lambda)}.
\]
Then the minimal polynomial for $f(A)$ is given by
\[
\minpo{f(A)}(t)\,=\,\prod_{\nu\in f(\sigma(A))}(t-\nu)^{k(\nu)},
\]
where, $k(\nu)=\max\left\{\intf{[}{]}{m(\lambda)-1}
{\mathsf{ord}_{\lambda}{f'}+1}+1:\,\,\lambda \in \sigma(A)\cap f^{-1}\{\nu\}\right\}.
$
\end{theorem}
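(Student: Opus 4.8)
The plan is to reduce the computation for a general $A\in\OM_n$ to the case of a single Jordan block (handled by Lemma~\ref{L:minpo_Jordan}) or a direct sum of Jordan blocks attached to a single eigenvalue (handled by Lemma~\ref{L:minmo_Jordanblocks}) via similarity invariance and the multiplicativity of the holomorphic functional calculus with respect to direct sums. First I would fix a similarity $S\in GL_n(\C)$ putting $A$ into Jordan canonical form: $S^{-1}AS=\bigoplus_{\lambda\in\sigma(A)}J(\lambda)$, where for each $\lambda\in\sigma(A)$ the block $J(\lambda)=\bigoplus_{i}J_{n_i(\lambda)}(\lambda)$ collects all Jordan blocks of $A$ with eigenvalue $\lambda$, and $n(\lambda):=\max_i n_i(\lambda)$ is, by definition of the minimal polynomial, exactly $m(\lambda)$. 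Since $\sigma(A)\subset\D$ and $f\in\hol(\D)$, the functional calculus applies, and by property~(i) (the $\C$-algebra homomorphism property, which one checks respects conjugation by $S$, since it does so on polynomials and polynomials suffice by \eqref{E:defeq_funccal}) together with the fact that the functional calculus of a block-diagonal matrix is the block-diagonal matrix of the functional calculi, we get
\[
S^{-1}f(A)S \,=\, \bigoplus_{\lambda\in\sigma(A)} f\big(J(\lambda)\big).
\]

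Next I would invoke the standard fact (already used in the proof of Lemma~\ref{L:minmo_Jordanblocks}) that the minimal polynomial of a finite direct sum is the least common multiple, in $\C[t]$, of the minimal polynomials of the summands; minimal polynomials are similarity invariants, so $\minpo{f(A)}=\operatorname{lcm}_{\lambda\in\sigma(A)}\minpo{f(J(\lambda))}$. By Lemma~\ref{L:minmo_Jordanblocks} applied to each $\lambda$ (using $n(\lambda)=m(\lambda)$), and noting the trivial case where $J(\lambda)$ is a direct sum of $1\times1$ blocks is covered since then $m(\lambda)=1$ and $[(m(\lambda)-1)/(\mathsf{ord}_\lambda f'+1)]+1=1$, we obtain
\[
\minpo{f(J(\lambda))}(t)\,=\,(t-f(\lambda))^{\,[(m(\lambda)-1)/(\mathsf{ord}_\lambda f'+1)]+1}.
\]
So $\minpo{f(A)}(t)=\operatorname{lcm}_{\lambda\in\sigma(A)}(t-f(\lambda))^{e(\lambda)}$ with $e(\lambda):=[(m(\lambda)-1)/(\mathsf{ord}_\lambda f'+1)]+1$.

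The final step is to compute this lcm. The roots appearing are the points $f(\lambda)$, $\lambda\in\sigma(A)$; by the Spectral Mapping Theorem these are exactly the elements $\nu$ of $f(\sigma(A))=\sigma(f(A))$. For a fixed $\nu\in f(\sigma(A))$, the linear factor $(t-\nu)$ appears in $\minpo{f(J(\lambda))}$ precisely for those $\lambda\in\sigma(A)$ with $f(\lambda)=\nu$, i.e.\ $\lambda\in\sigma(A)\cap f^{-1}\{\nu\}$, and the exponent of $(t-\nu)$ in the lcm is the maximum of $e(\lambda)$ over this (finite, nonempty) set. Since distinct $\nu$ give coprime linear factors, the lcm factors as the product over $\nu\in f(\sigma(A))$ of $(t-\nu)^{k(\nu)}$ with $k(\nu)=\max\{e(\lambda):\lambda\in\sigma(A)\cap f^{-1}\{\nu\}\}$, which is exactly the claimed formula. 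The only point requiring a little care\,---\,and the step I expect to be the main (though modest) obstacle\,---\,is justifying that conjugation by $S$ commutes with $f(\bcdot)$ and that $f$ of a block-diagonal matrix is block-diagonal; both follow from property~(i) of Section~\ref{S:holo_fc} by first verifying them for polynomials and then using that, by \eqref{E:defeq_funccal}, $f(A)$ depends only on finitely many derivative values of $f$ on $\sigma(A)$ and hence agrees with $p(A)$ for a suitable interpolating polynomial $p$.
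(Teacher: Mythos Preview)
Your proposal is correct and follows essentially the same approach as the paper: reduce via Jordan canonical form and similarity invariance to a direct sum, apply Lemma~\ref{L:minmo_Jordanblocks} to each eigenvalue's block, and take the lcm. The only difference is organizational\,---\,the paper groups the Jordan blocks by the image value $\nu=f(\lambda)$ before taking the direct sum and then uses coprimality of the $(t-\nu)$'s to split the lcm into a product, whereas you group by $\lambda$ and fold the coprimality observation into the final lcm computation; the content is the same.
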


\begin{proof}
By the Spectral Mapping Theorem the minimal polynomial for $f(A)$ is given by
$\prod_{\nu\in f(\sigma(A))}(t-\nu)^{k(\nu)}$ for some $k(\nu)\in\nat$. We must now show that
$k(\nu)$ are as stated above. Let $\mathfrak{S}(\nu)$, for each $\nu\in f(\sigma(A))$,
denote the set
\[
\mathfrak{S}(\nu)\,:=\,\{\lambda\in\sigma(A)\,:\,f(\lambda)\,=\,\nu\}.
\]
Then $\{\mathfrak{S}(\nu)\,:\,\nu\in f(\sigma(A))\}$ gives a partition of $\sigma(A)$.
\smallskip

The Jordan canonical form tells us that $\exists S\in GL_{n}(\C)$ such that 
\begin{equation}\label{E:Jordanform_A}
A\,=\,S\left[\oplus_{\nu\in f(\sigma(A))}\left[\oplus_{\lambda\in\mathfrak{S}(\nu)}
\left[\oplus_{i=1}^{q(\lambda)}J_{n_{i}^\lambda}(\lambda)\right]\right]\right]S^{-1},
\end{equation}
where $\left\{J_{n_{i}^\lambda}(\lambda)\,:\,i\in\intgR{1}{q(\lambda)}\right\}$ is the Jordan block-system
associated to $\lambda$ such that $n_{1}^\lambda\leq n_{2}^\lambda\leq\cdots\leq n_{q(\lambda)}^\lambda$.
Now from \eqref{E:Jordanform_A} and from the basic properties of the holomorphic functional calculus
we get
\begin{equation}\label{E:minpo_Jordanform_A}
\minpo{f(A)}\,=\,{\boldsymbol{\mathsf{M}}}\left(\oplus_{\nu\in f(\sigma(A))}\left[\oplus_{\lambda\in\mathfrak{S}(\nu)}
\left[f\left(\oplus_{i=1}^{q(\lambda)}J_{n_{i}^\lambda}(\lambda)\right)\right]\right]\right)
\end{equation}
(we will sometimes write $\minpo{B}$ as ${\boldsymbol{\mathsf{M}}}(B)$ for convenience).
Notice that if $\nu_1\not=\nu_2\in f(\sigma(A))$, the matrices
$\oplus_{\lambda\in\mathfrak{S}(\nu_j)}
f\left(\oplus_{i=1}^{q(\lambda)}J_{n_{i}^\lambda}(\lambda)\right),\ j=1,2$, have
$\{\nu_{1}\}$ and $\{\nu_{2}\}$, respectively, as their spectra.
Hence the minimal polynomials of these are relatively
prime to each other. This implies that (from \eqref{E:minpo_Jordanform_A} above):
\begin{equation}\label{E:minpo_sum_Jordanblocks}
\minpo{f(A)}\,=\,\prod_{\nu\in f(\sigma(A))}{\boldsymbol{\mathsf{M}}}
\left(\left[\oplus_{\lambda\in\mathfrak{S}(\nu)}
\left[f\left(\oplus_{i=1}^{q(\lambda)}J_{n_{i}^\lambda}(\lambda)\right)\right]\right]\right).
\end{equation}
The above is the consequence of the fact that the minimal polynomial of a direct sum of matrices
is the least common multiple of the minimal polynomials of the individual matrices. This also implies 
that
\begin{equation}\label{E:minpo_lcm}
{\boldsymbol{\mathsf{M}}}\left(\left[\oplus_{\lambda\in\mathfrak{S}(\nu)}
\left[f\left(\oplus_{i=1}^{q(\lambda)}J_{n_{i}^\lambda}(\lambda)\right)\right]\right]\right)\,=\,
{\sf lcm}\left\{{\boldsymbol{\mathsf{M}}}
\left(f\left(\oplus_{i=1}^{q(\lambda)}J_{n_{i}^\lambda}(\lambda)\right)\right)\,:\,
\lambda\in\mathfrak{S}(\nu)\right\}.
\end{equation}
For a fixed $\lambda\in\mathfrak{S}(\nu)$, recall that
$n_{1}^\lambda\leq n_{2}^\lambda\leq\cdots\leq n_{q(\lambda)}^\lambda$. Furthermore, 
$n_{q(\lambda)}^\lambda\,=\,m(\lambda)$, $m(\lambda)$ being the multiplicity of $\lambda$ in
$\minpo{A}$. Putting all of this together with Lemma~\ref{L:minmo_Jordanblocks} we have:
\begin{equation}\label{E:minpo_holo_jordan}
{\boldsymbol{\mathsf{M}}}\left(f\left(\oplus_{i=1}^{q(\lambda)}J_{n_{i}^\lambda}(\lambda)\right)\right)
(t)\,=\,\left(t-\nu\right)^{\intf{[}{]}{m(\lambda)-1}{\mathsf{ord}_{\lambda}{f'}+1}+1}\,\,
\forall\lambda\in\mathfrak{S}(\nu).
\end{equation}
Now, \eqref{E:minpo_holo_jordan} and \eqref{E:minpo_lcm} together give us:
\[
{\boldsymbol{\mathsf{M}}}\left(\left[\oplus_{\lambda\in\mathfrak{S}(\nu)}
\left[f\left(\oplus_{i=1}^{q(\lambda)}J_{n_{i}^\lambda}(\lambda)\right)\right]\right]\right)\,=\,
(t-\nu)^{k(\nu)},
\]
where $k(\nu)$ is as stated in our theorem. The above, in view of \eqref{E:minpo_sum_Jordanblocks},
gives the result.
\end{proof}
\smallskip

\section{Two fundamental lemmas}\label{S:princ_lemma}

In this section, we state two fundamental and closely related lemmas.
Lemma~\ref{L:fl2} serves as the link between the two main results of this paper. 
Both lemmas are simple, once we appeal to Vesentini's theorem. We begin by stating this result.

\begin{result}[Vesentini, \cite{vst:shSprd68}]\label{Res:satz_vst}
Let $\mathscr{A}$ be a complex, unital Banach algebra and let $\rho(x)$ denote the spectral radius
of any element $x\in\mathscr{A}$. Let $f\in\hol(\D,\,\mathscr{A})$. Then, the function
$\zt\longmapsto\rho(f(\zt))$ is subharmonic on $\D$.
\end{result}

\begin{lemma}\label{L:fl1}
Let $\Phi\in\hol(\D,\,\overline{\Omega}_n)$ be such that there exists a
$\theta_0\in\mathbb{R}$ and $\zt_0\in\D$ satisfying $e^{i\theta_0}\in\sigma(\Phi(\zt_0))$.
Then $e^{i\theta_0}\in\sigma(\Phi(\zt))$ for all $\zt\in\D$.
\end{lemma}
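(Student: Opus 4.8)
The plan is to show that
\[
E \ := \ \{\zeta\in\D : e^{i\theta_0}\in\sigma(\Phi(\zeta))\}
\]
is a non-empty, relatively closed, \emph{and} relatively open subset of $\D$; connectedness of $\D$ then forces $E=\D$, which is exactly the assertion. Write $\lambda_0:=e^{i\theta_0}$. Non-emptiness is the hypothesis ($\zeta_0\in E$), and closedness is immediate: $E$ is the zero set of the continuous function $\zeta\mapsto\det(\lambda_0 I-\Phi(\zeta))$ on $\D$. So the whole content lies in the openness of $E$, and this is where the holomorphic functional calculus, Cauchy's integral formula, and the maximum modulus principle combine.

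To prove $E$ open, I would fix $\zeta_*\in E$ and let $r_0\geq 1$ be the algebraic multiplicity of $\lambda_0$ as an eigenvalue of $\Phi(\zeta_*)$. First choose $\varepsilon>0$ with $\overline{D(\lambda_0,\varepsilon)}\cap\sigma(\Phi(\zeta_*))=\{\lambda_0\}$; then, by the upper semicontinuity of the spectrum (continuity of the coefficients of the characteristic polynomial together with continuity of roots), choose a disc $U:=D(\zeta_*,\delta)\subseteq\D$ small enough that for every $\zeta\in U$ the circle $\{|\lambda-\lambda_0|=\varepsilon\}$ misses $\sigma(\Phi(\zeta))$ and the eigenvalues of $\Phi(\zeta)$ inside $D(\lambda_0,\varepsilon)$, counted with algebraic multiplicity, total exactly $r_0$. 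Now set, for $\zeta\in U$,
\[
s(\zeta) \ := \ \frac{1}{2\pi i}\oint_{|\lambda-\lambda_0|=\varepsilon}\lambda\,\mathrm{tr}\!\left((\lambda I-\Phi(\zeta))^{-1}\right)d\lambda .
\]
Since the integrand is jointly holomorphic in $(\zeta,\lambda)$ on $U\times\{|\lambda-\lambda_0|=\varepsilon\}$, $s$ is holomorphic on $U$; and writing $\mathrm{tr}((\lambda I-\Phi(\zeta))^{-1})=\sum_{\mu\in\sigma(\Phi(\zeta))}m_\zeta(\mu)(\lambda-\mu)^{-1}$ (with $m_\zeta(\mu)$ the algebraic multiplicity) and computing residues gives $s(\zeta)=\sum_{\mu}m_\zeta(\mu)\,\mu$, the sum taken over $\mu\in\sigma(\Phi(\zeta))$ with $|\mu-\lambda_0|<\varepsilon$.

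The finish is the maximum principle. Because $\Phi\in\hol(\D,\,\overline{\Omega}_n)$, every $\mu\in\sigma(\Phi(\zeta))$ has $|\mu|\leq 1$, and since the pertinent multiplicities sum to $r_0$ we get $|s(\zeta)|\leq r_0$ on $U$; meanwhile $s(\zeta_*)=r_0\lambda_0$, so $|s(\zeta_*)|=r_0$. Hence $s\equiv r_0\lambda_0$ on $U$ by the maximum modulus principle. Multiplying this identity by $\overline{\lambda_0}$ and taking real parts, for each fixed $\zeta\in U$ we obtain $\sum_{\mu}m_\zeta(\mu)\,\mathrm{Re}(\overline{\lambda_0}\mu)=r_0=\sum_{\mu}m_\zeta(\mu)$ (same index set as above). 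As $\mathrm{Re}(\overline{\lambda_0}\mu)\leq|\mu|\leq 1$ term by term, equality of the two sums forces $\mathrm{Re}(\overline{\lambda_0}\mu)=1$, hence $|\mu|=1$ and $\overline{\lambda_0}\mu=1$, i.e. $\mu=\lambda_0$, for every $\mu\in\sigma(\Phi(\zeta))$ with $|\mu-\lambda_0|<\varepsilon$. In particular $\lambda_0\in\sigma(\Phi(\zeta))$, so $U\subseteq E$, and $E$ is open.

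I expect the only delicate point — though it is entirely standard — to be the ``spectral stability'' step: arranging $U$ so that the contour $\{|\lambda-\lambda_0|=\varepsilon\}$ stays in the resolvent set of $\Phi(\zeta)$ and encircles total multiplicity exactly $r_0$ for all $\zeta\in U$. This is nothing but the local constancy of the rank of the Riesz idempotent $\frac{1}{2\pi i}\oint_{|\lambda-\lambda_0|=\varepsilon}(\lambda I-\Phi(\zeta))^{-1}d\lambda$, and it is the one place the argument draws on genuine spectral theory rather than pure function theory. Incidentally, one may also observe at the outset that $\rho(\Phi(\zeta))\equiv 1$ on $\D$ — immediate from Vesentini's theorem (Result~\ref{Res:satz_vst}) and the maximum principle for subharmonic functions, since $\rho(\Phi(\cdot))\leq 1$ with value $1$ at $\zeta_0$ — but this fact is not actually needed for the proof sketched above.
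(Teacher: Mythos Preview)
Your argument is correct, but it takes a genuinely different route from the paper's.  The paper's proof is a three-line global argument: set $\widetilde{\Phi}(\zeta):=\Phi(\zeta)+e^{i\theta_0}\mathbb{I}$, note that $\rho\circ\widetilde{\Phi}$ is subharmonic on $\D$ by Vesentini's theorem (Result~\ref{Res:satz_vst}), observe that $\rho\circ\widetilde{\Phi}\leq 2$ everywhere with equality at $\zeta_0$, and conclude $\rho\circ\widetilde{\Phi}\equiv 2$ by the maximum principle for subharmonic functions; since $\sigma(\widetilde{\Phi}(\zeta))=e^{i\theta_0}+\sigma(\Phi(\zeta))\subset e^{i\theta_0}+\overline{\D}$, this forces $e^{i\theta_0}\in\sigma(\Phi(\zeta))$ for all $\zeta$.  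By contrast, you run a local-to-global open/closed/connected argument, the heart of which is a Riesz-projection construction: you build a \emph{holomorphic} scalar function $s$ (the trace of the spectral projection times $\Phi$) and apply the ordinary maximum modulus principle to it.  What you gain is that you never invoke Vesentini's result\,---\,your only complex-analytic input is the maximum modulus principle for holomorphic functions\,---\,and you extract the finer information that near $\zeta_*$ the eigenvalue $\lambda_0$ persists with the \emph{same} algebraic multiplicity $r_0$.  What the paper's approach buys is brevity and the avoidance of any spectral-stability machinery: no Riesz idempotents, no continuity-of-roots, just one subharmonic function and one shift.  It is amusing that you mention Vesentini's theorem in your final aside as ``not actually needed''; in the paper it is the entire engine.
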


\begin{proof}
Define $\widetilde{\Phi}\in\hol(\D,\,M_n(\C))$ by
\[
\widetilde{\Phi}(\zt)\,=\,\Phi(\zt)+e^{i\theta_0}\mathbb{I},\,\,\forall \zt\in\D.
\]
By Result~\ref{Res:satz_vst}, $\rho\circ\widetilde{\Phi}$ is subharmonic on $\D$. Notice 
that for each $\zt\in\D$ we have $\rho\circ\widetilde{\Phi}(\zt)\leq 2$,
and $\rho(\widetilde{\Phi}(\zt_0))=2$. By the maximum
principle for subharmonic functions it follows that $\rho\circ\widetilde{\Phi}\equiv 2$. As 
$\sigma(\widetilde{\Phi}(\zt))=e^{i\theta_0}+\sigma(\Phi(\zt))$
and $\sigma(\Phi(\zt))\subseteq\overline{\D}$, this implies that
$e^{i\theta_0}\in\sigma(\Phi(\zt))\,\, \forall \zt\in\D$. Hence the lemma.
\end{proof}

The next lemma is, essentially, a fragment of a proof in \cite[Section 3]{gb:itplSpUb07}. However,
since it requires a non-trivial fact\,---\,i.e., the plurisubharmonicity 
of the spectral radius\,---\,we provide a proof.

\begin{lemma}\label{L:fl2}
Let $F\in\hol(\D,\,\Omega_n)$ be such that $F(0)=0$. Then, there exists
$G\in\hol(\D,\,\overline{\Omega}_n)$ such that $F(\zt)\,=\,\zt\,G(\zt)$ for all $\zt\in\D$.
\end{lemma}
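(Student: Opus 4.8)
The plan is to define $G$ as the obvious matrix quotient and then use Vesentini's theorem (Result~\ref{Res:satz_vst}) to bound the spectral radius of $G$. Since $F\in\hol(\D,\Omega_n)$ is in particular a holomorphic $M_n(\C)$-valued function with $F(0)=0$, every entry of $F$ vanishes at $0$, so by Riemann's removable-singularity theorem applied entrywise the map
\[
G(\zt):=\begin{cases} \zt^{-1}F(\zt), & \zt\in\D\setminus\{0\},\\[2pt] F'(0), & \zt=0,\end{cases}
\]
belongs to $\hol(\D,M_n(\C))$ and satisfies $F(\zt)=\zt\,G(\zt)$ for all $\zt\in\D$. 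It therefore remains only to verify that $G$ takes values in $\overline{\Omega}_n$, i.e. that $\rho(G(\zt))\le 1$ for every $\zt\in\D$.

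Next I would record the elementary scaling identity $\sigma(G(\zt))=\zt^{-1}\sigma(F(\zt))$ for $\zt\neq 0$ (a trivial case of the Spectral Mapping Theorem), which, since $F(\zt)\in\Omega_n$ forces $\rho(F(\zt))<1$, yields
\[
\rho\bigl(G(\zt)\bigr)\;=\;\frac{\rho\bigl(F(\zt)\bigr)}{|\zt|}\;<\;\frac{1}{|\zt|}\qquad\bigl(\zt\in\D\setminus\{0\}\bigr).
\]
By Result~\ref{Res:satz_vst} applied to $G$ (with $\mathscr{A}=M_n(\C)$), the function $\zt\mapsto\rho(G(\zt))$ is subharmonic on $\D$; it is also continuous, and the displayed bound shows it is bounded on $\D$ (it is $<2$ on $\{\tfrac12\le|\zt|<1\}$ and is bounded on the compact set $\{|\zt|\le\tfrac12\}$). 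Moreover, for each $\xi\in\bdy\D$ the same bound gives
\[
\limsup_{\zt\to\xi,\;\zt\in\D}\rho\bigl(G(\zt)\bigr)\;\le\;\limsup_{\zt\to\xi}\frac{1}{|\zt|}\;=\;1.
\]

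Finally I would invoke the maximum principle for subharmonic functions on the bounded domain $\D$: a subharmonic function whose boundary $\limsup$ is $\le 1$ at every point of $\bdy\D$ is $\le 1$ throughout $\D$. This gives $\rho(G(\zt))\le 1$ for all $\zt\in\D$, i.e. $\sigma(G(\zt))\subset\overline{\D}$, hence $G\in\hol(\D,\overline{\Omega}_n)$, and the lemma follows. The only delicate point\,---\,and the sole place where the hypothesis $F(\D)\subset\Omega_n$ is used, as opposed to the weaker $F(\D)\subset\overline{\Omega}_n$\,---\,is the passage to $\bdy\D$: it is precisely the strict inequality $\rho(F(\zt))<1$ together with $1/|\zt|\to 1$ as $|\zt|\to 1^-$ that supplies the boundary bound needed to run the maximum principle; with only $\rho(F(\zt))\le 1$ in hand this step would fail. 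I do not anticipate any other obstacle.
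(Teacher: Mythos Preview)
Your proof is correct and essentially the same as the paper's: both factor $F(\zt)=\zt\,G(\zt)$, invoke Vesentini's theorem to make $\rho\circ G$ subharmonic, and conclude via the maximum principle---the only cosmetic difference being that the paper applies the maximum principle on circles $|\zt|=R<1$ and lets $R\nearrow 1$, whereas you pass directly to the boundary $\limsup$. One small inaccuracy in your closing commentary (not in the proof itself): the \emph{strict} inequality $\rho(F(\zt))<1$ is not actually what makes your boundary step work, since even $\rho(F(\zt))\le 1$ would yield $\rho(G(\zt))\le 1/|\zt|$ and hence the same $\limsup_{\zt\to\xi}\rho(G(\zt))\le 1$.
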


\begin{proof}
As $F(0)=0$, there exists $G\in\hol(\D,\,M_n(\C))$ such that $F(\zt)\,=\,\zt\,G(\zt)\,\,\forall\zt\in\D$. 
Fix a $\zt\in\D\setminus\{0\}$ and let $R\in (0,\,1)$ be such that $R>|\zt|$. Then on the circle
$|w|\,=\,R$ we have 
\begin{align}
\rho(F(w))\,&=\,R\,\rho(G(w))\nonumber\\
\implies \rho(G(w))\,&=\,\intf{}{}{\rho(F(w))}{R}<\intf{}{}{1}{R}\,\,\forall w:\,|w|=R,\label{E:maxSp}
\end{align}
where $\rho$ denotes the spectral radius. We again appeal to Vesentini's Theorem. Subharmonicity 
of $\rho\circ G$, the maximum principle and \eqref{E:maxSp} give us:
\[
\rho(G(\zt))<\intf{}{}{1}{R}\,\,(\text{recall that $|\zt|<R$}).
\]
By taking $R\nearrow 1$, and from the fact that $\zt\in\D$ was arbitrary,
we get $\rho(G(\zt))\leq 1\,\,\forall\zt\in\D$.
This is equivalent to $G\in\hol(\D,\,\overline{\Omega}_n)$.
\end{proof}
\medskip

\section{Some notations and results in basic complex geometry}\label{S:nott_comx_geom}

This section is devoted to a couple of results in the geometry and function theory in the
holomorphic setting that are relevant to our proof of Theorem~\ref{T:Schwarzlemma_corres_corol}.
Our first result pertains to the structure of a holomorphic correspondence $\Gamma$ from $\D$ to
$\Omega$ with the properties as in Theorem~\ref{T:Schwarzlemma_corres_corol}. For this, we need the
following standard result (see \cite[Section~3.1]{chirka:comx_ana_set89}, for instance).

\begin{result}\label{Res:crit_propproj}
Let $\OM_1\subsetneq X$ and $\OM_2\subsetneq Y$ be open subsets in topological
spaces $X$ and $Y$ respectively, with $\overline{\OM}_2$ being compact. Let $A$ be a closed subset in
$\OM_1\times \OM_2$. Then the restriction to $A$ of the projection $(x,\,y)\longmapsto x$ is proper if and only if
$\overline{A}\cap(\OM_1\times\partial{\OM_2})=\emptyset$.
\end{result}

Owing to the above result, any holomorphic correspondence $\Gamma$ from $D_1$ to
 $D_2$, which are domains in $\C^{n}$, such that
$\overline{\Gamma}\cap(D_1\times\partial{D_2})=\emptyset$ is called a 
\emph{proper holomorphic correspondence}. We can now state and prove the result that we need.
This result is deducible, in essence, from \cite[Section~4.2]{chirka:comx_ana_set89}.
However, since that discussion pertains to a much more general setting,
we provide a proof in the set-up that we are interested in.

\begin{lemma}\label{L:corres_equa_zeroset}
Let $\Omega$ be a bounded domain in $\C$ and let $\Gamma$ be a proper holomorphic correspondence from
$\D$ to $\Omega$. Then there exist an $n\in\Z_{+}$ and functions $a_1,\dots,a_n\in\hol(\D)$
such that
\begin{equation}
\Gamma=\Big\{(z,w)\in\D\times\Omega\,:\,w^n+\sum_{j=1}^n(-1)^ja_j(z)w^{n-j}=0\Big\}.
\end{equation} 
\end{lemma}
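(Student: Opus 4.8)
The plan is to produce the functions $a_1,\dots,a_n$ by exhibiting $\Gamma$ as the zero-set of a Weierstrass-type polynomial in $w$ whose coefficients are the elementary symmetric functions of the fibres $\multF(z)$, and to check that these coefficients are holomorphic on all of $\D$. First I would invoke Result~\ref{Res:crit_propproj} with $\OM_1=\D$, $\OM_2=\OM$: since $\OM\Subset\C$ is bounded, $\overline{\OM}$ is compact, and the properness hypothesis $\overline{\Gamma}\cap(\D\times\bdy\OM)=\emptyset$ gives that $\left.\pi_1\right|_\Gamma:\Gamma\to\D$ is a proper map. Because $\Gamma$ is a $1$-dimensional analytic subvariety of the $2$-dimensional manifold $\D\times\OM$ and $\pi_1$ restricted to it is proper with surjective image $\D$ (the latter being part of Definition~\ref{D:hol_corres}), $\left.\pi_1\right|_\Gamma$ is a branched covering of $\D$; here I would cite \cite[Section~3.1 and Section~4.2]{chirka:comx_ana_set89}. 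Let $n$ be the number of sheets of this covering, i.e.\ the cardinality of a generic fibre $\multF(z)$ (counted without multiplicity off the branch locus), and let $Z\subset\D$ be the (discrete, hence finite-in-compacts) branch locus.

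Next I would define, for $z\in\D\setminus Z$,
\[
a_j(z):=\sigma_j\big(\multF(z)\big)\qquad(1\le j\le n),
\]
the $j$-th elementary symmetric polynomial evaluated at the $n$ points of the fibre over $z$, so that for each such $z$ the polynomial $P_z(w):=w^n+\sum_{j=1}^n(-1)^j a_j(z)w^{n-j}$ has exactly the set $\multF(z)$ as its roots. Standard branched-covering theory says each $a_j$ is holomorphic on $\D\setminus Z$; near a branch point one orders the branches by a local Puiseux parametrization, and the symmetric functions of the branches are single-valued and locally bounded there (bounded because $\multF(z)\subset\OM$ and $\OM$ is bounded), so by the Riemann removable-singularity theorem each $a_j$ extends holomorphically across $Z$, giving $a_1,\dots,a_n\in\hol(\D)$. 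It then remains to verify the set equality: the inclusion $\Gamma\subseteq\{(z,w):P_z(w)=0\}$ holds over $\D\setminus Z$ by construction and over $Z$ by taking limits (both sides are closed), while the reverse inclusion holds because over $\D\setminus Z$ the right-hand side has exactly $n$ points in each fibre — all lying in $\Gamma$ — and over $Z$ one again passes to the limit, using that $\left.\pi_1\right|_\Gamma$ is proper so no mass escapes to $\bdy\OM$. Since both are analytic subsets of $\D\times\OM$ agreeing off a lower-dimensional set and the right-hand side is the zero set of a monic (hence, fibrewise, non-degenerate) polynomial, they coincide.

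The main obstacle I anticipate is the careful passage across the branch locus $Z$: one must argue that the fibre of $\Gamma$ over a branch point consists of at most $n$ points, that these are precisely the roots of $P_z$ (with the correct identification of multiplicities — a root of $P_z$ of multiplicity $k$ must correspond to $k$ branches coming together), and that no sheet of the covering "disappears into $\bdy\OM$" as $z\to Z$. All three points are exactly where the properness of $\left.\pi_1\right|_\Gamma$ (equivalently, $\overline{\Gamma}\cap(\D\times\bdy\OM)=\emptyset$) is used, together with the fact that an analytic subvariety of dimension $1$ cannot have isolated removable gaps in a fibre over a point where the projection is proper. Modulo invoking \cite{chirka:comx_ana_set89} for the branched-covering normal form, the rest is the Riemann removable-singularity theorem applied to the bounded holomorphic functions $a_j$, which is routine. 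One should also record (for Remark~\ref{Rem:mult_corr} and Theorem~\ref{T:Schwarzlemma_corres_corol}) that this integer $n$ — the generic fibre cardinality — is what is meant by the \emph{multiplicity} of $\Gamma$, and note in passing that when $\OM=\D$ the pair $(\Gamma,a_1,\dots,a_n)$ is exactly the datum of an algebroid multifunction in the sense of \cite{nokraneRansford:Slam01}.
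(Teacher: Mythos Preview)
Your proposal is correct and follows essentially the same route as the paper: both arguments use properness to realize $\left.\pi_1\right|_\Gamma$ as an $n$-sheeted branched covering of $\D$, define the $a_j$ as elementary symmetric functions of the fibre off the branch locus, and then extend across the branch locus via Riemann's removable singularities theorem. The only cosmetic difference is that the paper computes the limiting value of each $a_j$ at a branch point explicitly (as a symmetric function of the fibre counted with intersection multiplicity) before invoking removability and Vieta's formulas, whereas you appeal directly to the boundedness of $\OM$; both justifications suffice.
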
 

\begin{proof}
Since $\left.\pi_1\right|_{\Gamma}$ is proper, it follows from the elementary theory of proper
holomorphic maps that there exists a discrete set $\mathcal{A}\subset\D$ and an $n\in\Z_{+}$ such
that $(\Gamma\setminus{\pi_1^{-1}(\mathcal{A})},\,\D\setminus\mathcal{A},\,\left.\pi_1\right|_{\Gamma})$
is an $n$-fold analytic covering. Thus, given any $p\in\D\setminus\mathcal{A}$, there exist an open
neighborhood $V_p$ such that $p\in V_p\subset\D\setminus\mathcal{A}$, and $n$
holomorphic inverse branches of $\left.\pi_1\right|_{\Gamma}$ ;
$(\pi_1)^{-1}_{1,\,p},\dots,(\pi_1)^{-1}_{n,\,p}\in\hol(V_p,\,\Gamma)$; such that the images of
$(\pi_1)^{-1}_{j,\,p}$, $j=1,\ldots,n$, are disjoint.
\smallskip

Let $\mathscr{S}_{j}$ denote the $j$-th elementary symmetric polynomial in $n$ symbols. Define:
\[
a_j(z):=\mathscr{S}_{j}((\pi_1)^{-1}_{1,\,z}(z),\dots,(\pi_1)^{-1}_{n,\,z}(z))
 \ \forall z\in\D\setminus\mathcal{A}, \ 1\leq j\leq n.
\]
Clearly, $a_j$ does not depend on the order in which $\{(\pi_1)^{-1}_{k,\,p}\}_{k\in\intgR{1}{n}}$
are labeled, whence it is well-defined. Now, fix a $p\in\D\setminus\mathcal{A}$. Provisionally,
for $z\in V_p$, let us define:
\[
(\pi_1)^{-1}_{j,\,z}:=\text{the holomorphic inverse branch of $\left.\pi_1\right|_{\Gamma}$
that maps $z$ to $(\pi_1)^{-1}_{j,\,p}(z),$}
\]
which is well-defined, because the images of $(\pi_1)^{-1}_{j,\,p}$, $j=1,\dots,n$, are disjoint.
By the same reason $\{(\pi_1)^{-1}_{k,\,z}(z):1\leq k\leq n\}=\{(\pi_1)^{-1}_{j,\,p}(z):1\leq j\leq n\}$.
From the last two assertions, we have 
\[
a_j(z)=\mathscr{S}_{j}((\pi_1)^{-1}_{1,\,p}(z),\dots,(\pi_1)^{-1}_{n,\,p}(z)) \ \forall z\in V_p.
\]
This tells us that $a_j$ is $\C$-differentiable at each $p\in\D\setminus\mathcal{A}$, whence 
$a_j\in\hol(\D\setminus\mathcal{A})$. It is easy to see that, for each $q\in\mathcal{A}$
\begin{equation}\label{E:exiL_excep}
\lim_{\D\setminus\mathcal{A}\,\ni z\to q}a_j(z)=\sum_{(i_1,\dots,i_j)\in\mathscr{I}_{n}(j)}
\prod\nolimits_{w_{i_k}\in\bv{(\pi_1^{-1}\{q\}\cap\Gamma)},\,{1\leq k\leq j}} \ w_{i_k},
\end{equation}
where $\mathscr{I}_n(j)$ denotes the set of all increasing $j$-tuples of $\intgR{1}{n}$,
$j=1,\ldots,n$, and
\begin{align*}
\bv{(\pi_1^{-1}\{q\}\cap\Gamma)}:={}& \text{an enumeration of the {\bf list} of elements
in $\pi_1^{-1}\{q\}\cap\Gamma$} \\
{}& \text{repeated according to intersection multiplicity.}
\end{align*}
From \eqref{E:exiL_excep} and Riemann's removable singularities theorem, we conclude that 
$a_j$ extends to a holomorphic function, $j\in\intgR{1}{n}$.
Furthermore, by the definition of $\left.a_j\right|_{\D\setminus\mathcal{A}}$ and by \eqref{E:exiL_excep},
we conclude, from Vieta's formulas that, fixing a $z_0\in\D$:
\begin{align*}
{}& \text{the {\bf list}, repeated according to multiplicity, of the zeros of
$w^n+\sum_{j=1}^n(-1)^ja_j(z_0)w^{n-j}$}\\
{}&=\bv{\multF(z_0)}.
\end{align*}
As $z_0\in\D$ was arbitrary, the result follows. 
\end{proof}

\begin{remark}\label{Rem:not_mltply}
We have used a notation in our proof of Lemma~\ref{L:corres_equa_zeroset}\,---\,see \eqref{E:exiL_excep} and
the clarifications that follow\,---\,that will be useful in later discussions/calculations. Namely: if $S$ is a non-empty set,
and there is a multiplicity associated to each $s\in S$, then we shall use the notation $\bv{S}$ to denote the {\bf list}
of elements of $S$ repeated according to their multiplicity.
\end{remark}

\begin{remark}\label{Rem:mult_corr}
The positive integer $n$ that appears in
the above lemma is known as \emph{the multiplicity of $\Gamma$}.
In general, when we have a proper holomorphic correspondence $\Gamma$ from $\Omega_1$ to $\Omega_2$ with
$\dim(\Gamma)=\dim(\Omega_1)$, then there exists an analytic variety
$\mathcal{A}\subset\Omega_1$ with $\dim{\mathcal{A}}<\dim(\Gamma)$
such that the cardinality of  $\pi_1^{-1}\{z\}\cap\Gamma=k$ for all $z\in\Omega\setminus\mathcal{A}$
(see \cite[Section~3.7]{chirka:comx_ana_set89}).
This generalizes the notion of multiplicity to higher dimensions.
\end{remark}

We shall now look at an extremal problem associated to the Carath\'{e}odory
pseudo-distance $C_{\Omega}$ on the domain $\Omega$ in $\C$. Recall the discussion in Section~\ref{S:intro}
about the Carath\'{e}odory pseudo-distance, and the reasons that we prefer using the following definition:
\begin{align}
C_{\Omega}(p,\,q)&:=\sup\{\mathcal{M}_{\D}(f(p),\,f(q))\,:\,f\in\hol(\Omega,\,\D)\}\nonumber\\
&=\sup\{|\,f(q)\,|\,:\,f\in\hol(\Omega,\,\D)\,:\,f(p)=0\}.\label{E:alt_def_cara}
\end{align}
The equality in \eqref{E:alt_def_cara} is due to the fact that the automorphism group of $\D$
acts transitively on $\D$ and the M{\"o}bius distance is invariant under its action.
Applying Montel's Theorem, it is easy to see that there exists a function 
$g\in\hol(\Omega,\,\D)$ such that $g(p)=0$ and $g(q)=C_{\Omega}(p,\,q)$. Such a function is
called an \emph{extremal solution} for the extremal problem determined by \eqref{E:alt_def_cara}.\vspace{1mm}
\smallskip

We will always consider domains $\Omega$ in $\C$ for which $H^{\infty}(\Omega)$, the set of all 
bounded holomorphic functions in $\Omega$, separates points in $\Omega$. For such domains the Carath\'{e}odory
pseudo-distance clearly is a distance. Moreover it turns out that for such domains there is a unique extremal
solution (see the last two paragraphs in \cite{fsh:shwaLemInnfun69}).
In Section~\ref{S:SchLem_holcorres} (since the domains considered there are bounded), we will always denote by $G_{\Omega}(p,\,q;\,\bcdot{•})$
the unique extremal solution determined by the extremal problem \eqref{E:alt_def_cara}.
\medskip

\section{The proof of Theorem~\ref{T:3pt_nec}}\label{S:3pt_nec_proof}

This section is largely devoted to the proof of Theorem~\ref{T:3pt_nec}.
Closely related to it is our example\,---\,referred to in Section~\ref{S:intro}\,---\,that compares
the necessary condition given by Theorem~\ref{T:3pt_nec} with other necessary conditions for
the existence of a $3$-point interpolant from $\D$ to $\OM_n$. This example is presented after
our proof.

\subsection{The proof of Theorem~\ref{T:3pt_nec}}\label{SS:3pt_nec_proof}

Let $F\in\hol(\D,\,\Omega_n)$ be such that $F(\zt_j)=W_j$, $j\in\{1,2,3\}$. Fix $k\in\{1,2,3\}$ and
write:
\[
\widetilde{F_k}:=B_k\circ F\circ\psi_k^{-1},
\]
where $B_k$, $\psi_k$ are as described before the statement of Theorem~\ref{T:3pt_nec}.
Notice that $\widetilde{F_k}\in\hol(\D,\,\Omega_n)$ and satisfies
$\widetilde{F_k}(\psi_k(\zt_{L(k)}))=B_k(W_{L(k)}),\,\widetilde{F_k}(\psi_k(\zt_{G(k)}))
=B_k(W_{G(k)}),\,\widetilde{F_k}(0)=0$. By Lemma~\ref{L:fl2}, we get
\begin{equation}\label{E:fact_aux_intp}
\widetilde{F_k}(\zt)=\zt\,\widetilde{G_k}(\zt)\,\,\forall\zt\in\D,\,\,\text{ for some 
$\widetilde{G_k}\in\hol(\D,\,\overline{\Omega}_n)$}.
\end{equation}
Two cases arise:
\smallskip

\noindent{\bf Case 1.} $\widetilde{G_k}(\D)\subset\Omega_n$.

\noindent In view of \eqref{E:fact_aux_intp}, we have 
\begin{equation}\label{E:2pt_redut_intp}
\widetilde{G_k}\left(\psi_k(\zt_{L(k)})\right)\,=\,W_{L(k),\,k},\,\,
\widetilde{G_k}\left(\psi_k(\zt_{G(k)})\right)\,=\,W_{G(k),\,k},
\end{equation}
where $W_{L(k),\,k}:={B_k(W_{L(k)})}\big{/}{\psi_{k}(\zt_{L(k)})}$ and
$W_{G(k),\,k}:={B_k(W_{G(k)})}\big{/}{\psi_{k}(\zt_{G(k)})}$.
Now by Result~\ref{Res:gbintS}, a necessary condition for \eqref{E:2pt_redut_intp} is
\begin{equation}\label{E:ineq_2pt_redut}
\max\left\{\sub{\eta\in\sigma\left(W_{L(k),\,k}\right)}{\max}|\,b_{G(k),\,k}(\eta)\,|,\,
\sub{\eta\in\sigma\left(W_{G(k),\,k}\right)}{\max}|\,b_{L(k),\,k}(\eta)\,|\right\}
\leq\mathcal{M}_{\D}\left(\zt_{G(k)},\,\zt_{L(k)}\right),
\end{equation}
where $b_{L(k),\,k}$ and $b_{G(k),\,k}$ denote the minimal Blaschke product corresponding to the 
matrices $W_{L(k),\,k}\,,\,\,W_{G(k),\,k}$. Given the definitions of the latter matrices, we will need 
Theorem~\ref{T:minpo_holo_func_anal} to determine  $b_{L(k),\,k}$, $b_{G(k),\,k}$. By this theorem, we have
\begin{align}
b_{L(k),\,k}(t)&=\prod_{\nu\in\sigma\left(B_k(W_{L(k)})\right)}
\!\!\!{\intf{(}{)}{t-{\nu}/{\psi_k(\zt_{L(k)})}}{1-\overline{{\nu}/{\psi_k(\zt_{L(k)})}}t}}^{q(\nu,\,L(k),\,k)}\,\,
\label{E:minpo_2pt_redut1}\\ 
b_{G(k),\,k}(t)&=\prod_{\nu\in\sigma\left(B_k(W_{G(k)})\right)}
\!\!\!{\intf{(}{)}{t-{\nu}/{\psi_k(\zt_{G(k)})}}{1-\overline{{\nu}/{\psi_k(\zt_{G(k)})}}t}}^{q(\nu,\,G(k),\,k)},\label{E:minpo_2pt_redut2}
\end{align}
where $q(\nu,\,L(k),\,k)$ and $q(\nu,\,G(k),\,k)$ are as in the statement of Theorem~\ref{T:3pt_nec}.
Now if $\eta\in\sigma(W_{L(k),\,k})$ or $\eta\in\sigma(W_{G(k),\,k})$,
then $\eta=\mu/{\psi_k(\zt_{L(k)})}$ for some $\mu\in\sigma(B_k(W_{L(k)}))$ or
$\eta=\mu/{\psi_k(\zt_{G(k)})}$ for some $\mu\in\sigma(B_k(W_{G(k)}))$, respectively, and conversely.
This observation together with \eqref{E:minpo_2pt_redut2}, \eqref{E:minpo_2pt_redut1} and 
\eqref{E:ineq_2pt_redut} establishes the first part of our theorem.
\smallskip

\noindent{\bf Case 2.} $\widetilde{G_k}(\D)\cap\partial\,\Omega_n\not=\emptyset$.

\noindent Let $\zt_0\in\D$ such that $e^{i\theta_0}\in\sigma(\widetilde{G_k}(\zt_0))$
for some $\theta_0\in\mathbb{R}$. By Lemma~\ref{L:fl1}, we have
$e^{i\theta_0}\in\sigma(\widetilde{G_k}(\zt))$ for every $\zt\in\D$. By \eqref{E:fact_aux_intp},
$e^{i\theta_0}\zt\in\sigma(\widetilde{F_k}(\zt))$. Let $\Phi\,\equiv\,F\circ\psi_k^{-1}$. Then
$\Phi\in\hol(\D,\,\Omega_n)$ and we have:
\[
e^{i\theta_0}\zt\in\sigma(B_k\circ\Phi(\zt))=B_k\{\sigma(\Phi(\zt))\}\,\,\forall\zt\in\D,
\]
where the last equality is an application of the Spectral Mapping Theorem .
For each $\zt\in\D$, let $\omega_{\zt}\in\sigma(\Phi(\zt))$ be such that
$B_{k}(\omega_{\zt})\,=\,e^{i\theta_0}\zt$. Notice that if $\zt_1\not=\zt_2$ then
$\omega_{\zt_1}\not=\omega_{\zt_2}$, whence $E\,:=\,\{\omega_{\zt}\,:\,\zt\in\D\}$ is
an uncountable set. Notice that $\omega_{\zt}$ satisfies:
\[
B_k(\omega_\zt)=e^{i\theta_0}\zt\,\,\,\text{and}\,\,\,
\mathrm{det}\left(\omega_{\zt}\mathbb{I}-\Phi(\zt)\right)
=0\,\,\forall\zt\in\D.
\]
This implies $\mathrm{det}\left(\omega_{\zt}\mathbb{I}-\Phi(\zt)\right)\,=\,0$ for every $\omega_{\zt}\in E$.
As $E$ is uncountable, it follows that 
$\mathrm{det}\left((\bcdot)\mathbb{I}-\Phi\circ(e^{-i\theta_0}B_k)\right)\equiv 0$. As $B_k$ maps $\D$
onto itself, it follows that 
\begin{equation}\label{E:blasholcor}
B_k^{-1}\{e^{i\theta_0}\zt\}\subset\sigma(\Phi(\zt))=\sigma\left(F\circ\psi_k^{-1}(\zt)\right)\,\,\forall\zt\in\D.
\end{equation}
Putting $\zt=\psi_k(\zt_{L(k)})$ and $\psi_k(\zt_{G(k)})$ respectively in \eqref{E:blasholcor}
we get 
$B_k^{-1}\{e^{i\theta_0}\psi_k(\zt_{L(k)})\}\subset\sigma(F(\zt_{L(k)}))
=\sigma(W_{L(k)})$ and
$B_k^{-1}\{e^{i\theta_0}\psi_k(\zt_{G(k)})\}\subset\sigma(F(\zt_{G(k)}))=\sigma(W_{G(k)})$.\qed
\smallskip

We now present our example that compares the condition given by Theorem~\ref{T:3pt_nec} with that of 
Costara and Ogle and Baribeau--Kamara as alluded to in Remark~\ref{Rmk:comparison}.

\begin{example}\label{Exam:comparison}
For each $n\geq 4$, there is a class of $3$-point data-sets for which 
Theorem~\ref{T:3pt_nec}\linebreak
\pagebreak

\noindent{implies that it cannot admit any $\hol(\D,\,\OM_n)$-interpolant, whereas
the conditions given by \cite{costara:osNPp05, ogle:thesis99} and by Result~\ref{Res:inequality_baribeauKamara}
provide no information.}
\end{example}

\noindent Let $0$, $a$ and $b$ be distinct points in $\D$. For each $n\geq 4$, we will construct a class of $3$-point
matricial data\,---\,of the form $\{0, \A, \B\}$ such that $0, \A, \B\in\OM_n$, where $\A$ and $\B$ will depend
on $n, a, b$\,---\,for which the aforementioned statement holds true. To this end, consider the matrices:
\begin{align*}
  \A&:=\sum_{j=0}^{n-1}\alpha_j N^j,\ \text{where $\alpha_j\in\C$ is such that}
         \ \alpha_j=0 \ \forall j\in\intgR{0}{n-3} \ \text{and} \ \alpha_{n-2}\neq 0, \\
  \B&:=\text{diag}[\beta_1,\dots,\beta_n]:=\ \text{the diagonal matrix with {\bf distinct} entries $\beta_i$}
\end{align*} 
such that $\beta_1=0$ and such that 
\begin{itemize}
 \item $\beta_i^2\neq\beta_j^2\;\;\forall i\neq j$,
 \item $|\,\beta_i\,|<|\,b\,|\;\;\forall i\in\intgR{1}{n}$, and
 \item $|\,\beta_i\,|^2<\mobi(a,\,b)\;\;\forall i\in\intgR{1}{n}$.
\end{itemize}
We shall see the relevance of these conditions presently. Here, $N$ is the nilpotent matrix
introduced in Lemma~\ref{L:minmo_lincomb_nilpo}.
\smallskip

Notice that by Lemma~\ref{L:minmo_lincomb_nilpo} the minimal polynomial for $\A$ is given by 
$\minpo{\A}(t)=t^2$ while its characteristic polynomial is $t^n$. As $n\geq 4$, $\A$ is not a non-derogatory
matrix. Thus, for the data $\{(0,0),(a,\A),(b,\B)\}$ the result given by Costara and Ogle cannot
be applied in this setting, and hence yields no information. 
\smallskip

Let us compute the form that the condition \eqref{E:inequality_baribeauKamara} takes for these data
by setting $(\zt_1, W_1)=(0, 0)$, $(\zt_2, W_2)=(a, \A)$ and $(\zt_3, W_3)=(b, \B)$.
Observe: $B_1(t)=t$ for every $t\in\D$. The Spectral Mapping Theorem then implies that 
$\sigma(B_1(W_j))=B_1(\sigma(W_j))=\sigma(W_j)$ for $j=2,3$. Hence we have:
\[
 \mathcal{H}^{\mobi}\left(\sigma\intf{(}{)}{B_1(W_{2})}{\psi_1(\zt_{2})}\cap\D,\;\;
  \sigma\intf{(}{)}{B_1(W_{3})}{\psi_1(\zt_{3})}\cap\D\right)=\max_{1\leq i\leq n}\intf{|}{|}{\,\beta_i\,}{\,b\,}.
\]
Since $\beta_i$'s are all distinct, $\nu=n$. So the condition \eqref{E:inequality_baribeauKamara}
turns out to be:
\begin{equation}\label{E:barbkamar_1}
 \max_{1\leq i\leq n}\intf{|}{|}{\,\beta_i\,}{\,b\,}\leq\mobi(a,\,b)^{1/n}.
\end{equation}
Permuting the roles of $(\zt_j,W_j)$, $j=1,2,3$, in Result~\ref{Res:inequality_baribeauKamara}
provides two other conditions. In the case when $(\zt_1, W_1)=(b, \B)$ the condition \eqref{E:inequality_baribeauKamara}
holds trivially, because its left-hand side reduces to $0$. On the other hand if $(\zt_1, W_1)=(a, \A)$,
then under the restriction $\beta_i^2\neq\beta_j^2$, for $i\neq j$, and that $|\,\beta_i\,|^2<\mobi(a,\,b)$ for
every $i\in\intgR{1}{n}$, we leave it to the reader to check that this condition turns out to be:
\begin{equation}\label{E:barbkamar_2}
 \max_{1\leq i\leq n}\intf{|}{|}{\,\beta_i^2\,}{\,\psi_a(b)\,}\leq |\,b\,|^{1/n}.
\end{equation} 

Now let us compute the form that the necessary condition provided by Theorem~\ref{T:3pt_nec} takes for the above data
in the case $k=1$, $L(k)=2$, $G(k)=3$. To this end, we observe first:
\[
 \sub{\mu\in\sigma\left(B_1(W_{2})\right)}{\max}\prod_{\nu\in\sigma\left(B_1(W_{3})\right)}
 {\mathcal{M}_{\D}\left(\intf{}{}{\mu}{\psi_1(\zt_{2})},\,\intf{}{}{\nu}{\psi_1(\zt_{3})}\right)}^{q(\nu,\,3,\,1)}
  = \prod_{i=1}^n{\intf{|}{|}{\,\beta_i\,}{\,b\,}}^{q(\beta_i,\,3,\,1)},
\]
which is equal to zero since $q(\beta_i,\,3,\,1)=1$ for every $i\in\intgR{1}{n}$ and $\beta_1=0$.
On the other hand:
\[
 \sub{\mu\in\sigma\left(B_1(W_{3})\right)}{\max}\prod_{\nu\in\sigma\left(B_1(W_{2})\right)}
 {\mathcal{M}_{\D}\left(\intf{}{}{\mu}{\psi_1(\zt_{3})},\,\intf{}{}{\nu}{\psi_1(\zt_{2})}\right)}^{q(\nu,\,2,\,1)}
 =\max\left\{{\intf{|}{|}{\,\beta_i\,}{\,b\,}}^{q(0,\,2,\,1)}:i\in\intgR{1}{n}\right\}.
\] 
Notice $m(2,\,0)$\,---\,the multiplicity of $0$ as a zero of $\minpo{\A}$\,---\,is $2$ and $\mathsf{ord}_{0}{B'_1}=0$ whence 
$q(0,\,2,\,1)=2$. Hence, one of the conditions given by Theorem~\ref{T:3pt_nec} is:
\begin{equation}\label{E:barbkamar_3}
 \max_{1\leq i\leq n}{\intf{|}{|}{\,\beta_i\,}{\,b\,}}^2\leq\mobi(a,\,b).
\end{equation} 
Notice that 
\begin{equation}\label{E:barbkamar1_2_geq_3}
 |\,b\,|\mobi(a,\,b)^{1/2}<\min\left\{|\,b\,|{\mobi(a,\,b)}^{1/n},\, |\,b\,|^{1/2n}{\mobi(a,\,b)}^{1/2}\right\}\;\;\forall n\geq 3.
\end{equation}
Now, let us choose $\{\beta_1,\dots,\beta_n\}\subset\D$ such that, in addition to the conditions listed above,
\[
 |\,\beta_i\,|\leq \min\left\{|\,b\,|{\mobi(a,\,b)}^{1/n},\, |\,b\,|^{1/2n}{\mobi(a,\,b)}^{1/2}\right\}\;\;\forall i\in\intgR{2}{n},
\]
and such that for some $i_0\in\intgR{2}{n}$,
\[
 |\,\beta_{i_0}\,|>|\,b\,|\mobi(a,\,b)^{1/2}.
\]
This is possible owing to \eqref{E:barbkamar1_2_geq_3}. We now see that all forms of the condition 
arising from Result~\ref{Res:inequality_baribeauKamara} are satisfied by the given data-set, while 
\eqref{E:barbkamar_3} does not hold true. Hence, Theorem~\ref{T:3pt_nec} implies that there does not exist
an $F\in\hol(\D,\,\OM_n)$ such that $F(0)=0$, $F(a)=\A$ and $F(b)=\B$ while Result~\ref{Res:inequality_baribeauKamara}
provides no information.
\medskip

\section{A Schwarz lemma for holomorphic correspondences}\label{S:SchLem_holcorres}

This section is dedicated to the proof of Theorem~\ref{T:Schwarzlemma_corres_corol}. However, as hinted
at in Section~\ref{S:intro}, there is a more precise inequality, from which Theorem~\ref{T:Schwarzlemma_corres_corol}
follows. We begin, therefore, with the following:

\begin{theorem}\label{T:SchwLem_holcorres}
Let $\Omega$ be a bounded domain in $\C$ and let $\Gamma$ be a proper holomorphic correspondence
from $\D$ to $\Omega$. Then, for every $\zt_1,\zt_2\in\D$ we have
\[
\max\Big\{\sub{\mu\in\bv{\multF(\zt_2)}}{\max}\prod_{\nu\in\bv{\multF(\zt_1)}}C_\Omega(\nu,\,\mu), \
\sub{\mu\in\bv{\multF(\zt_1)}}{\max}\prod_{\nu\in\bv{\multF(\zt_2)}}C_\Omega(\nu,\,\mu)\Big\}
\leq\mathcal{M}_{\D}(\zt_1,\,\zt_2),
\]
where $\bv{\multF(\bcdot)}$ is as in Section~\ref{S:intro}, read together
with Remark~\ref{Rem:not_mltply}.
\end{theorem}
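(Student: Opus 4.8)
The plan is to reduce the statement about a proper holomorphic correspondence $\Gamma$ from $\D$ to $\Omega$ to a statement about a holomorphic map into the spectral ball, so that Result~\ref{Res:gbintS} can be applied. First I would invoke Lemma~\ref{L:corres_equa_zeroset} to write $\Gamma$ as the zero set of $w^n + \sum_{j=1}^n (-1)^j a_j(z) w^{n-j}$ with $a_1,\dots,a_n\in\hol(\D)$, where $n$ is the multiplicity of $\Gamma$. The natural matricial object to build from this is the map $z\mapsto \mathsf{C}_{p_z}$, the companion matrix of the monic polynomial $p_z(w) := w^n + \sum_{j=1}^n(-1)^j a_j(z) w^{n-j}$; since the $a_j$ are holomorphic, $z\mapsto\mathsf{C}_{p_z}$ is a holomorphic map $\D\to M_n(\C)$. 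Because $\Gamma$ is \emph{proper}, the boundedness $\overline\Gamma\cap(\D\times\bdy\Omega)=\emptyset$ forces the zeros of $p_z$ — equivalently $\sigma(\mathsf{C}_{p_z})=\multF(z)$ — to stay in a compact subset of $\Omega\Subset\C$. Hence, if $\OM$ itself sits inside a large disc $D(0,R)$, after rescaling we obtain $F\in\hol(\D,\OM_n)$ with $\sigma(F(z)) = \multF(z)$ for all $z$; in fact, since $\Omega\Subset\C$ is bounded, $\sup_{z}\rho(F(z))<1$ in the rescaled picture. The Jordan structure of $\mathsf{C}_{p_z}$ is the useful extra fact: $\mathsf{C}_{p_z}$ is non-derogatory, so its minimal polynomial equals its characteristic polynomial, which means for $\lambda\in\sigma(F(z))$ the minimal-polynomial multiplicity $m(\lambda)$ is exactly the multiplicity of $\lambda$ as a zero of $p_z$, i.e.\ the intersection multiplicity recorded in $\bv{\multF(z)}$.

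Next I would feed the points $\zt_1,\zt_2$ into Result~\ref{Res:gbintS} applied to this $F$ (after rescaling $\Omega$ into $\D$, which only changes the Carath\'eodory distance computation, not the structure). That result gives
\[
\max\left\{\max_{\mu\in\sigma(W_2)}\prod_{\lambda\in\sigma(W_1)}\mobi(\mu,\lambda)^{m(\lambda)},\ \max_{\lambda\in\sigma(W_1)}\prod_{\mu\in\sigma(W_2)}\mobi(\lambda,\mu)^{m(\mu)}\right\}\le \mobi(\zt_1,\zt_2),
\]
where $W_j=F(\zt_j)$. Using the non-derogatory observation, $\prod_{\lambda\in\sigma(W_1)}\mobi(\mu,\lambda)^{m(\lambda)}$ is exactly $\prod_{\nu\in\bv{\multF(\zt_1)}}\mobi(\mu,\nu)$ — the product over the \emph{list} with multiplicity — and similarly on the other side. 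So the hypothetical-matricial inequality is already in the shape of the target, \emph{except} that it is phrased with $\mobi$ evaluated on the (rescaled) points $\multF(\zt_j)$ rather than with $C_\Omega$.

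The bridge from $\mobi$ to $C_\Omega$ is where the Carath\'eodory machinery of Section~\ref{S:nott_comx_geom} enters: $C_\Omega(\nu,\mu)=\sup\{\mobi(g(\nu),g(\mu)) : g\in\hol(\Omega,\D)\}$, and there is a unique extremal solution $G_\Omega(\nu,\mu;\cdot)$. The idea is that applying any $g\in\hol(\Omega,\D)$ to the correspondence — i.e.\ forming the pushed-forward correspondence $g_*\Gamma$, whose fibre over $z$ is the list $g(\bv{\multF(z)})$ — again yields a holomorphic map into $\OM_n$ (compose $g$ with $F$ via the holomorphic functional calculus; $\sigma(g(F(z)))=g(\sigma(F(z)))$ by the Spectral Mapping Theorem). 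Running Result~\ref{Res:gbintS} on $g\circ F$ for a well-chosen family of $g$'s, and then taking a supremum over $g\in\hol(\Omega,\D)$, upgrades each factor $\mobi(\nu,\mu)$ in the product to $C_\Omega(\nu,\mu)$. One must be slightly careful that the \emph{same} $g$ cannot simultaneously make every pair extremal; the clean route is: fix $\mu\in\bv{\multF(\zt_2)}$ and for each $\nu$ in the list $\bv{\multF(\zt_1)}$ choose $g$ to be the extremal solution for the pair $(\text{base point},\nu)$ or rather handle it by noting $\prod_\nu \mobi(g(\nu),g(\mu)) \le$ RHS for every $g$, then observe that for a product one can still pass to the sup factor-by-factor after normalizing — more precisely, I would argue that $\sup_g \prod_\nu \mobi(g(\nu),g(\mu)) = \prod_\nu C_\Omega(\nu,\mu)$ using that $\hol(\Omega,\D)$ is closed under post-composition with disc automorphisms and that finite products of such sup's are attained in the limit by a common normal-family argument (Montel).

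The main obstacle I anticipate is precisely this last point — interchanging the finite product with the supremum over $\hol(\Omega,\D)$ to turn $\prod \mobi(\cdot,\cdot)$ into $\prod C_\Omega(\cdot,\cdot)$ — since a single competitor $g$ need not be extremal for all pairs at once. I expect this is resolved by a normal-families / limiting argument: take sequences $g_m$ that are asymptotically extremal for one distinguished factor while the remaining factors are bounded below away from $0$ (they are, because the points are in a fixed compact subset of $\Omega$ and $C_\Omega$ is a genuine distance on such domains), pass to a locally uniform limit, and iterate over the finitely many factors; alternatively one can appeal directly to Theorem~\ref{T:3pt_nec}'s proof technique of composing $F$ with minimal Blaschke products. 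The rest — the companion-matrix construction, properness giving the compactness of fibres, the non-derogatory identification of multiplicities, and the rescaling of $\Omega$ into $\D$ — should be routine given the results already in the excerpt.
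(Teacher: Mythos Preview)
Your reduction via the companion matrix $\Phi(z):=\mathsf{C}_{p_z}$ is correct and matches the paper, as does the identification of $\bv{\multF(z)}$ with $\bv{\sigma(\Phi(z))}$ using that companion matrices are non-derogatory. The gap is exactly where you flagged it: the passage from $\mobi$ to $C_\Omega$. For a single $g\in\hol(\Omega,\D)$, Result~\ref{Res:gbintS} applied to $g\circ\Phi$ gives (at best, when $g$ preserves the multiplicities)
\[
\prod_{\nu\in\bv{\multF(\zt_1)}}\mobi\big(g(\nu),g(\mu)\big)\;\le\;\mobi(\zt_1,\zt_2),
\]
and taking the supremum over $g$ on the left yields only $\sup_g\prod_\nu\mobi(g(\nu),g(\mu))\le\mobi(\zt_1,\zt_2)$. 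But $\sup_g\prod_\nu(\cdots)\le\prod_\nu\sup_g(\cdots)=\prod_\nu C_\Omega(\nu,\mu)$, with the inequality going the \emph{wrong way}. Your normal-families patch does not repair this: passing to a limit $g$ that is extremal for one pair $(\nu_1,\mu)$ leaves the other factors $\mobi(g(\nu_j),g(\mu))$ merely $\le C_\Omega(\nu_j,\mu)$, and ``iterating'' over factors cannot improve later ones without spoiling earlier ones. There is no reason a single $g$ is simultaneously extremal for all pairs $(\nu,\mu)$ (think of $\Omega$ an annulus).

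The paper's route is different and sidesteps Result~\ref{Res:gbintS} entirely. Fix $\mu$ (more precisely, a free parameter $z$, set to $\mu$ at the end) and form the \emph{single} function
\[
B(w):=\prod_{p\in\bv{\multF(0)}}G_\Omega(p,z;w)\in\hol(\Omega,\D),
\]
a product of Carath\'eodory extremals. Since each factor $G_\Omega(p,z;\cdot)$ vanishes at $p$, the Cayley--Hamilton theorem gives $B(\Phi(0))=0$. Now apply Lemma~\ref{L:fl2} directly to $\Psi:=B\circ\Phi\in\hol(\D,\Omega_n)$ to obtain $|B(\mu)|\le|\zt|$ for every $\mu\in\sigma(\Phi(\zt))$. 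Choosing $z=\mu$ makes each factor $|G_\Omega(p,\mu;\mu)|=C_\Omega(p,\mu)$, so $|B(\mu)|=\prod_p C_\Omega(p,\mu)$ exactly, and the inequality follows (then move $0$ to $\zt_1$ by precomposing with a disc automorphism). The point is that one chooses a \emph{$\mu$-dependent} test function whose value at $\mu$ is already the desired product of Carath\'eodory distances, rather than trying to recover that product as a supremum over a single competitor.
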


We remind the reader that $C_{\Omega}$ here is as defined by \eqref{E:alt_def_cara}. 
The proof of the above theorem is an easy consequence of the following lemma.

\begin{lemma}\label{L:flsch_hol_corres}
Let $\Gamma$ be as in the statement of Theorem~\ref{T:SchwLem_holcorres}. Then
\[
\sub{\mu\in\bv{\multF(\zt)}}{\max}\prod_{p\in\bv{\multF(0)}}C_{\Omega}(p,\,\mu)\leq |\,\zt\,| \ \forall\zt\in\D.
\]
\end{lemma}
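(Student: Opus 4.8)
The plan is to reduce the statement to the extremal functions for the Carath\'eodory pseudo-distance and then apply the matricial machinery of Section~\ref{S:minpo_holo_fc} together with Lemma~\ref{L:fl2}. First I would fix $\zt\in\D\setminus\{0\}$ and a point $\mu_0\in\bv{\multF(\zt)}$ achieving the maximum on the left-hand side; I also fix an enumeration $p_1,\dots,p_n$ of $\bv{\multF(0)}$ (repeated with multiplicity; here $n$ is the multiplicity of $\Gamma$ from Lemma~\ref{L:corres_equa_zeroset}). For each $j$ let $g_j := G_\Omega(p_j,\,\mu_0;\,\bcdot)\in\hol(\Omega,\D)$ be the unique extremal solution from Section~\ref{S:nott_comx_geom}, so $g_j(p_j)=0$ and $g_j(\mu_0)=C_\Omega(p_j,\mu_0)\geq 0$. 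The product $g:=\prod_{j=1}^n g_j\in\hol(\Omega,\D)$ then satisfies $g(\mu_0)=\prod_j C_\Omega(p_j,\mu_0)$, and $g$ vanishes at every point of $\bv{\multF(0)}$ to total order at least $n$ — this is the key feature I want to exploit.

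Next I would transfer $g$ to a holomorphic function on $\D$ via the correspondence. By Lemma~\ref{L:corres_equa_zeroset} write $\Gamma$ as the zero set of $w^n+\sum_{j=1}^n(-1)^j a_j(z) w^{n-j}$ with $a_j\in\hol(\D)$; for $z$ outside the discrete exceptional set $\mathcal{A}$ the fibre $\multF(z)$ consists of $n$ points $w_1(z),\dots,w_n(z)$ (local branches), and I define
\[
h(z) := \prod_{k=1}^n g\big(w_k(z)\big),\qquad z\in\D\setminus\mathcal{A}.
\]
This is well-defined and holomorphic on $\D\setminus\mathcal{A}$ (symmetric function of the branches), it is bounded by $1$, and it extends holomorphically across $\mathcal{A}$ by Riemann's removable-singularity theorem exactly as in the proof of Lemma~\ref{L:corres_equa_zeroset} — at an exceptional point the product is taken over the list $\bv{\multF(q)}$ repeated according to intersection multiplicity, so nothing blows up. Thus $h\in\hol(\D,\overline{\D})$, in fact $h\in\hol(\D,\D)$ unless $h$ is a unimodular constant, which cannot happen since $h(0)=\prod_{w\in\bv{\multF(0)}} g(w)=0$ because each $g_j$ kills the corresponding $p_j$. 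Applying the classical Schwarz lemma to $h$ gives $|h(z)|\le |z|$ for all $z$, and at $z=\zt$ one of the factors is $|g(\mu_0)|=\prod_{j=1}^n C_\Omega(p_j,\mu_0)=\prod_{p\in\bv{\multF(0)}} C_\Omega(p,\mu_0)$, while all other factors are $\le 1$; hence $\prod_{p\in\bv{\multF(0)}} C_\Omega(p,\mu_0)\le |\zt|$. Since $\mu_0\in\bv{\multF(\zt)}$ was the maximizer and $\zt\neq 0$ was arbitrary (the case $\zt=0$ being trivial), the lemma follows.

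Alternatively — and this is probably the cleaner route given how Section~\ref{S:holo_fc} is set up — instead of multiplying over branches I would feed $g$ into the holomorphic functional calculus: let $A(z):=\mathsf{C}_{p_z}$ be the companion matrix of $w^n+\sum(-1)^j a_j(z)w^{n-j}$, so $z\mapsto A(z)$ is holomorphic into $M_n(\C)$ with $\sigma(A(z))=\multF(z)$ as a list-with-multiplicity, and set $\Psi(z):=g(A(z))$, a holomorphic map into $\Omega_n$ with $\Psi(0)$ having spectrum inside $\D$ and in fact $\Psi(0)$ nilpotent-spectrum-free... no: the point is $\sigma(\Psi(z))=g(\sigma(A(z)))$ by the Spectral Mapping Theorem, and $\det\Psi(z)=\prod_{w\in\bv{\multF(z)}} g(w)$, which is a bounded holomorphic scalar function on $\D$ vanishing at $0$, so $|\det\Psi(z)|\le|z|$ by Schwarz; then $|g(\mu_0)|\cdot(\text{rest})=|\det\Psi(\zt)|\le|\zt|$ as before. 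I expect the main obstacle to be the bookkeeping at the exceptional set $\mathcal{A}$ — making precise that ``product over branches'' extends across branch points to ``product over the list repeated by intersection multiplicity,'' and that this matches the multiplicities used in $\bv{\multF(\cdot)}$ — but this is essentially identical to the removable-singularity argument already carried out in Lemma~\ref{L:corres_equa_zeroset}, so it should go through with only cosmetic changes.
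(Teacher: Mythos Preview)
There is a genuine gap in the final step, and it appears in both routes you sketch. From
\[
|h(\zt)|=\prod_{k=1}^n |g(w_k(\zt))|\le |\zt|
\qquad\text{(equivalently }|\det\Psi(\zt)|\le|\zt|\text{)}
\]
together with ``the other factors are $\le 1$'' you \emph{cannot} deduce $|g(\mu_0)|\le|\zt|$: if $A\cdot P\le C$ with $0\le P\le 1$, this only gives $A\le C/P$, which may well exceed $C$ (and says nothing at all when $P=0$). The scalar Schwarz lemma applied to $h$ (or to $\det\Psi$) controls only the \emph{product} of the values $|g(w_k(\zt))|$, whereas the lemma demands control of the single factor $|g(\mu_0)|$. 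Knowing that the remaining factors are small makes the product small irrespective of $|g(\mu_0)|$, so the inequality runs the wrong way for your purpose.

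The paper closes exactly this gap via Lemma~\ref{L:fl2}. Your companion-matrix route is in fact one step away from it: with $\Psi(z):=g\big(A(z)\big)\in\hol(\D,\Omega_n)$ one has $\Psi(0)=0$ (write $g(w)=\big(\prod_{j}(w-p_j)\big)\widetilde g(w)$ and apply Cayley--Hamilton to the companion matrix $A(0)$), and then Lemma~\ref{L:fl2}\,---\,which rests on Vesentini's subharmonicity of the spectral radius\,---\,yields $\Psi(\zt)=\zt\,\widetilde\Psi(\zt)$ with $\rho(\widetilde\Psi(\zt))\le 1$. Hence \emph{every} eigenvalue of $\Psi(\zt)$ lies in $|\zt|\,\overline{\D}$, i.e.\ $|g(\mu)|\le|\zt|$ for each $\mu\in\bv{\multF(\zt)}$ individually, in particular for $\mu=\mu_0$. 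This eigenvalue-by-eigenvalue bound is precisely what a determinant (or symmetric-product-over-branches) argument cannot deliver; replace ``take $\det\Psi$ and apply scalar Schwarz'' by ``apply Lemma~\ref{L:fl2} to $\Psi$'' and your second approach becomes the paper's proof.
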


\begin{proof}
By Lemma~\ref{L:corres_equa_zeroset}, there exists a positive integer $n$ and
functions $a_j\in\hol(\D)$, $j=1,\dots,n$, such that
\[
 \Gamma=\Big\{(z,w)\in\D\times\Omega\,:\,
w^n+\sum_{j=1}^n(-1)^j a_j(z)w^{n-j}=0\Big\}.
\]
Note that by our definition of $\bv{\multF(\bcdot)}$
\begin{align*}
\bv{\multF(\bcdot)}=& \ \text{the {\bf list}, repeated according to multiplicity, of the zeros of}\\
& w^n+\sum_{j=1}^n(-1)^j a_j(\bcdot)w^{n-j}.
\end{align*}
Let us now define an open set in $M_{n}(\C):S_{n}(\Omega):=\{A\in M_{n}(\C):\sigma(A)\subset \Omega\}$. 
Let $\Phi\in\hol(\D,\,M_{n}(\C))$ that is defined by
\[
\Phi(z):=\text{companion matrix corresponding to the polynomial}  \ w^n+\sum_{j=1}^n(-1)^j a_j(z)w^{n-j}.
\]
In the notation introduced by Remark~\ref{Rem:not_mltply}, $\bv{\sigma(A)}$ will denote the list of eigenvalues of $A$
repeated according to their multiplicity. In this notation, we have $\bv{\sigma(\Phi(z))}=\bv{\multF(z)}\subset\Omega$. 
Hence $\Phi(\D)\subset S_n(\Omega)$. Now we choose an arbitrary $z\in\Omega$ and fix it. Consider $B\in\hol(\Omega)$
defined by
\[
B:=\prod_{p\in\bv{\multF(0)}}G_{\Omega}(p,\,z;\,\bcdot),
\]
where $G_{\Omega}(p,\,z;\,\bcdot)$ denotes the Carath\'{e}odory extremal for points $p,z\in\Omega$,
whose existence was discussed in Section~\ref{S:nott_comx_geom}. As $B$ is holomorphic in $\Omega$,
it induces\,---\,via the holomorphic functional calculus\,---\, a map (which we continue to denote by $B$)
from $S_{n}(\Omega)$ to $M_n(\C)$. The Spectral Mapping Theorem tells us that $\sigma(B(A))=B(\sigma(A))\subset \D$
for every $A\in S_{n}(\Omega)$. Hence $B(A)\subset\Omega_n$ for every $A\in S_{n}(\Omega)$.
\smallskip

\noindent{\bf Claim.} $B(\Phi(0))=0$.

\noindent To see this we write:
\[
B(w)=\big(\prod\nolimits_{p\in\bv{\multF(0)}}(w-p)\big)g(w), \ w\in\Omega,
\]
where $g\in\hol(\Omega)$. Hence, since\,---\,by the holomorphic functional calculus\,---\,the assignment 
$f\longmapsto f(\Phi(0)), \ f\in\hol(\Omega)$, is multiplicative, as discussed in Section~\ref{S:holo_fc}
(also see Remark~\ref{Rem:matpara_func}), we get 
\[
B(\Phi(0))=\big(\prod\nolimits_{p\in\bv{\multF(0)}}(\Phi(0)-p\,\mathbb{I})\big)g(\Phi(0)).
\]
As $\bv{\multF(0)}=\bv{\sigma(\Phi(0))}$, Cayley--Hamilton Theorem implies that the product term in the 
right hand side of the above equation is zero, whence the claim.
\smallskip

Consider the map $\Psi$ defined by:
\[
\Psi(\zt):=B\circ\Phi(\zt), \ \zt\in\D.
\]
It is a fact that $\Psi\in\hol(\D)$. Moreover from the above claim and the discussion just before it,
we have $\Psi\in\hol(\D,\,\Omega_n)$ with $\Psi(0)=0$. By Lemma~\ref{L:fl2}
there exists $\widetilde{\Psi}\in\hol(\D,\,\overline{\Omega}_{n})$ such that 
\begin{equation}\label{E:fact_Psi}
\Psi(\zt)=\zt\widetilde{\Psi}(\zt) \ \forall\zt\in\D.
\end{equation}
From the definition of $\Psi$, \eqref{E:fact_Psi}, and from the Spectral Mapping Theorem, we get 
\begin{align*}
B(\bv{\sigma(\Phi(\zt))})&=\zt\bv{\sigma(\widetilde{\Psi}(\zt))}\\
\implies |\,B(\mu)\,|&\leq |\,\zt\,| \ \forall\mu\in\bv{\sigma(\Phi(\zt))}=\bv{\multF(\zt)}, \ \forall\zt\in\D.
\end{align*}
This in turn implies that 
\[
\prod_{p\in\bv{\multF(0)}}|\,G_{\Omega}(p,\,z;\,\mu)\,| \leq |\,\zt\,| \ \forall\zt\in\D, \ \forall\mu\in\bv{\multF(\zt)}.
\]
Since $z$ is arbitrary we can take $z=\mu$ for some $\mu\in\bv{\multF(\zt)}$. This with the observation that 
$G_{\Omega}(p,\,\mu;\,\mu)=C_{\Omega}(p,\,\mu)$ establishes the lemma.
\end{proof}
We are now ready to give:
\begin{proof}[The proof of the Theorem~\ref{T:SchwLem_holcorres}]
Fix $\zt_1$, $\zt_2\in\D$ and consider the correspondence $\widetilde{\Gamma}$ such that
\[
\bv{{F_{\widetilde{\Gamma}}(\zt)}}=\bv{{\multF(\psi_{\zt_1}^{-1}(\zt))}}.
\]
It is easy to see that $\widetilde{\Gamma}$ is a proper holomorphic correspondence from $\D$ to $\Omega$.
So from the lemma above and with the observation that
$\bv{{F_{\widetilde{\Gamma}}(0)}}=\bv{{\multF(\zt_1)}}$ we get 
\begin{equation}\label{E:ineq_1}
\sub{\mu\in\bv{{F_{\widetilde{\Gamma}}(\zt)}}}{\max}\prod_{p\in\bv{{\multF(\zt_1)}}}C_{\Omega}(p,\,\mu)
\leq |\,\zt\,| \ \forall\zt\in\D.
\end{equation}
Putting $\zt=\psi_{\zt_1}(\zt_2)$ in \eqref{E:ineq_1} gives us
\begin{equation}\label{E:ineq_2}
\sub{\mu\in\bv{\multF(\zt_2)}}{\max}\prod_{p\in\bv{{\multF(\zt_1)}}}C_{\Omega}(p,\,\mu)
\leq \mathcal{M}_{\D}(\zt_1,\,\zt_2).
\end{equation}
Interchanging the role of $\zt_1$ and $\zt_2$ in the above discussion, we get
\begin{equation}\label{E:ineq_3}
\sub{\mu\in\bv{{\multF(\zt_1)}}}{\max}\prod_{p\in\bv{{\multF(\zt_2)}}}C_{\Omega}(p,\,\mu)
\leq \mathcal{M}_{\D}(\zt_1,\,\zt_2).
\end{equation}
From \eqref{E:ineq_3} and \eqref{E:ineq_2} the result follows. 
\end{proof}

Theorem~\ref{T:Schwarzlemma_corres_corol} is a corollary of Theorem~\ref{T:SchwLem_holcorres}.
This is almost immediate; we probably just require a few words about the Hausdorff distance induced by $C_{\Omega}$.
We refer the reader to \cite[p.~279]{munk:topo_74} for the definition of the Hausdorff distance\,---\,which is a distance on the set of non-empty
closed, bounded subsets of a distance space $(X,\,d)$. In our case $(X,\,d)=(\Omega,\,C_{\Omega})$ and it is
easy to check that
\[
\hausd\big(\multF(\zt_1),\,\multF(\zt_2)\big)=\max\Big\{\max_{w\in \multF(\zt_1)}
{\rm dist}_{C_{\Omega}}\big(w,\,\multF(\zt_2)\big),\,
\max_{w\in \multF(\zt_2)}{\rm dist}_{C_{\Omega}}\big(w,\,\multF(\zt_1)\big)\Big\},
\]
given $\zt_1$, $\zt_2\in\D$, and where, given $p\in\Omega$, $\emptyset\not= S\subset\Omega$,
${\rm dist}_{C_{\Omega}}(p,\, S):=\inf\nolimits_{q\in S}C_{\Omega}(p,\,q)$.
Clearly, there exists $j,\,k\in\{1,2\}$ with $j\not=k$ such that
\[
\hausd\big(\multF(\zt_1),\,\multF(\zt_2)\big)^n\leq
\sub{\mu\in\bv{\multF(\zt_j)}}{\max}\prod_{\nu\in\bv{\multF(\zt_k)}}C_\Omega(\nu,\,\mu),
\]
where $n$ is the multiplicity of $\Gamma$. Combining the above inequality
with the inequality in Theorem~\ref{T:SchwLem_holcorres}, we establish
Theorem~\ref{T:Schwarzlemma_corres_corol}.
\medskip

\section*{Acknowledgements}
I wish to thank my thesis adviser Gautam Bharali for the many helpful discussions during the course
of this work. I am especially grateful for his supporting me as a Research Fellow under his Swarnajayanti
Fellowship (Grant No.~DST/SJF/MSA-02/2013-14).

\end{document}